\begin{document}

\title{Closed Affine Manifolds with an Invariant Line
\thanks{The author gratefully acknowledges research support from NSF Grant Award Number 1709791}
}
%\subtitle{Do you have a subtitle?\\ If so, write it here}

%\titlerunning{Short form of title}        % if too long for running head

\author{Charles Daly        
        %Second Author %etc.
}

%\authorrunning{Short form of author list} % if too long for running head

\institute{Charles Daly \at
              4176 Campus Dr, College Park, MD 20742 \\
              %Tel.: +123-45-678910\\
              %Fax: +123-45-678910\\
              \email{cdaly69@umd.edu}           %  \\
%             \emph{Present address:} of F. Author  %  if needed
           %\and
           %S. Author \at
              %second address
}

\date{Received: date / Accepted: date}
% The correct dates will be entered by the editor

\maketitle

%%%%%%%%%%%%%%%%%%%%%%%%
%%%						       %%%
%%% ABSTRACT		        		      %%%
%%%						       %%%
%%%%%%%%%%%%%%%%%%%%%%%%

\begin{abstract}
A closed affine manifold is a closed manifold with coordinate patches into affine space whose transition maps are restrictions of affine automorphisms.  Such a structure gives rise to a local diffeomorphism from the universal cover of the manifold to affine space that is equivariant with respect to a homomorphism from the fundamental group to the group of affine automorphisms.  The local diffeomorphism and homomorphism are referred to as the developing map and holonomy respectively.  In the case where the linear holonomy preserves a common vector, certain `large' open subsets upon which the developing map is a diffeomorphism onto its image are constructed.  A modified proof of the fact that a radiant manifold cannot have its fixed point in the developing image is presented.  Combining these results, this paper addresses the non-existence of certain closed affine manifolds whose holonomy leaves invariant an affine line.  Specifically, if the affine holonomy acts purely by translations on the invariant line, then the developing image cannot meet this line.
\keywords{affine \and manifolds \and holonomy \and invariant \and parallel \and radiant}
% \PACS{PACS code1 \and PACS code2 \and more}
% \subclass{MSC code1 \and MSC code2 \and more}
\end{abstract}

\newpage

%%%%%%%%%%%%%%%%%%%%%%%%%%%%%%%%
%%%						     				  %%%
%%% AFFINE STRUCTURE PRELIMINARIES     		     %%
%%%						       				%%%
%%%%%%%%%%%%%%%%%%%%%%%%%%%%%%%%

\section{Affine Structures Preliminaries}
\label{sec:1}
This section is largely dedicated to preliminary notions regarding closed affine manifolds.  Specifically, this section provides examples, notation, and some basic results regarding the developing map and holonomy.
%
%
%
%DEFINITION 1
\begin{definition}
An $n$-dimensional affine manifold is a $n$-dimensional manifold $M$ equipped with charts $(U_{\alpha}, \Phi_{\alpha})$ where each $\Phi_{\alpha} : U_{\alpha} \longrightarrow \mathbb{A}^{n}$ is a diffeomorphism onto an open subset of affine $n$-space such that the restriction of each transition map on each connected component of $U_{\alpha}\cap U_{\beta}$ is an affine automorphism.  Explicitly, this says for each pair of charts $(U_{\alpha}, \Phi_{\alpha})$  and $(U_{\beta}, \Phi_{\beta})$ and each connected component $V \subset U_{\alpha}\cap U_{\beta}$, there exists an affine automorphism $A_{\beta\alpha,V} : \mathbb{A}^{n} \longrightarrow \mathbb{A}^{n}$ so that the following equality holds
%
%
%
%
%%% EQUATION 1
\begin{equation}\label{eq1}
	\begin{tikzcd}
	& V \arrow{dl}[swap]{\Phi_{\alpha}} \arrow{dr}{\Phi_{\beta}} & \\
	\Phi_{\alpha}(V) \arrow{rr}[swap]{\Phi_{\beta}\circ\Phi_{\alpha}^{-1} = A_{\beta\alpha}|_{V}} & 		& \Phi_{\beta}(V)
	\end{tikzcd} \nonumber
\end{equation}
\end{definition}
This definition lends itself to a natural generalization of what is known as a $(G,X)$-manifold, where $G$ is a lie group acting strongly effectively on a manifold $X$.  The study of affine geometry is the study of $(\text{Aff}(n,\mathbb{R}), \mathbb{A}^{n})$-manifolds.  Here are some standard examples in the literature of affine structures on the two-torus.  
%
%
%
%
%%% EXAMPLE 1
\begin{example}\label{ex1}
Let $T$ be a rank two free abelian subgroup of $\text{Aff}(2,\mathbb{R})$ which acts on the affine plane by translations.  Let $M = \mathbb{A}^{2}/T$.  Since $T$ acts properly and freely on $\mathbb{A}^{2}$, the associated quotient map is a smooth covering and the charts on $M$ are naturally diffeomorphisms onto open subsets of $\mathbb{A}^{2}$.  The transition maps are given by elements of $T$, and thus $M$ inherits an affine structure.  In fact, quite a bit more can be said.  Since the group of translations preserve the standard euclidean metric, $M$ inherits a riemannian structure.  This provides $M$ a riemannian metric locally isometric to the euclidean plane.  Manifolds arising as quotients of affine space by discrete subgroups of $\text{Isom}(n,\mathbb{R})$  are known as euclidean manifolds.  Bieberach showed that such manifolds are finite coverings of euclidean tori.  
\end{example}
This construction of finding discrete subgroups of the affine group that act properly and freely on affine space yield an entire class of affine manifolds.  Invariants of the group such as vector fields, covector fields, and metrics descend to the quotient and are studied extensively in the field of geometric structure.  
%
%
%
%
%%% EXAMPLE 2
\begin{example}\label{ex2}
Let $D$ be a cyclic subgroup of $\text{Aff}(2,\mathbb{R})$ which acts on the punctured plane $\mathbb{C}^{\times}$ by positive dilations centered at the origin.  Say that $D$ is generated by some $\lambda > 0$, so $D$ acts properly and freely on $\mathbb{C}^{\times}$ as in the previous example, and thus the quotient $N = \mathbb{C}^{\times}/D$ inherits an affine structure.  The group $D$ preserves the lorrentzian metric $m = (dx^{2}+dy^{2})/(x^{2}+y^{2})$ and thus descends to a lorrentzian metric on $N$.  In contrast to the previous riemannian example, this metric is incomplete.  In fact, geodesics on $\mathbb{C}^{\times}$ pointed towards the origin will descend to geodesics on $N$ that become undefined in finite time.  This should be contrasted with the Hopf-Rinow of riemannian geometry which states that every closed riemannian manifold is geodesically complete.  
\end{example}
%
%
%
%
%%%EXAMPLE 3
\begin{example}\label{ex3}
One may generalize the construction in Example \ref{ex1} in the following fashion.  Pick a quadrilateral $Q$ in the affine plane.  From the bottom left vertex and reading counterclockwise, label the edges $\alpha,\beta,\gamma$ and $\delta$.  Construct a new quadrilateral in the following fashion.  Rotate and scale $Q$ at the bottom left vertex in such a fashion that the rotated $\delta$ differs from $\beta$ by translation along $\alpha$.  Translate the result along $\alpha$ to yield a new quadrilateral $Q'$ with edges $\alpha',\beta',\gamma',$ and $\delta'$.  This process glues $\beta$ of $Q$ to $\delta'$ of $Q'$.  Repeat this process with each pair of edges indefinitely.  Figure \ref{fig:1} above shows an example of this process where the edges are no longer simple translates of each other.
%
%
%
%
%%%% FIGURE 1
\begin{figure}
\begin{center}
\includegraphics[scale=0.35]{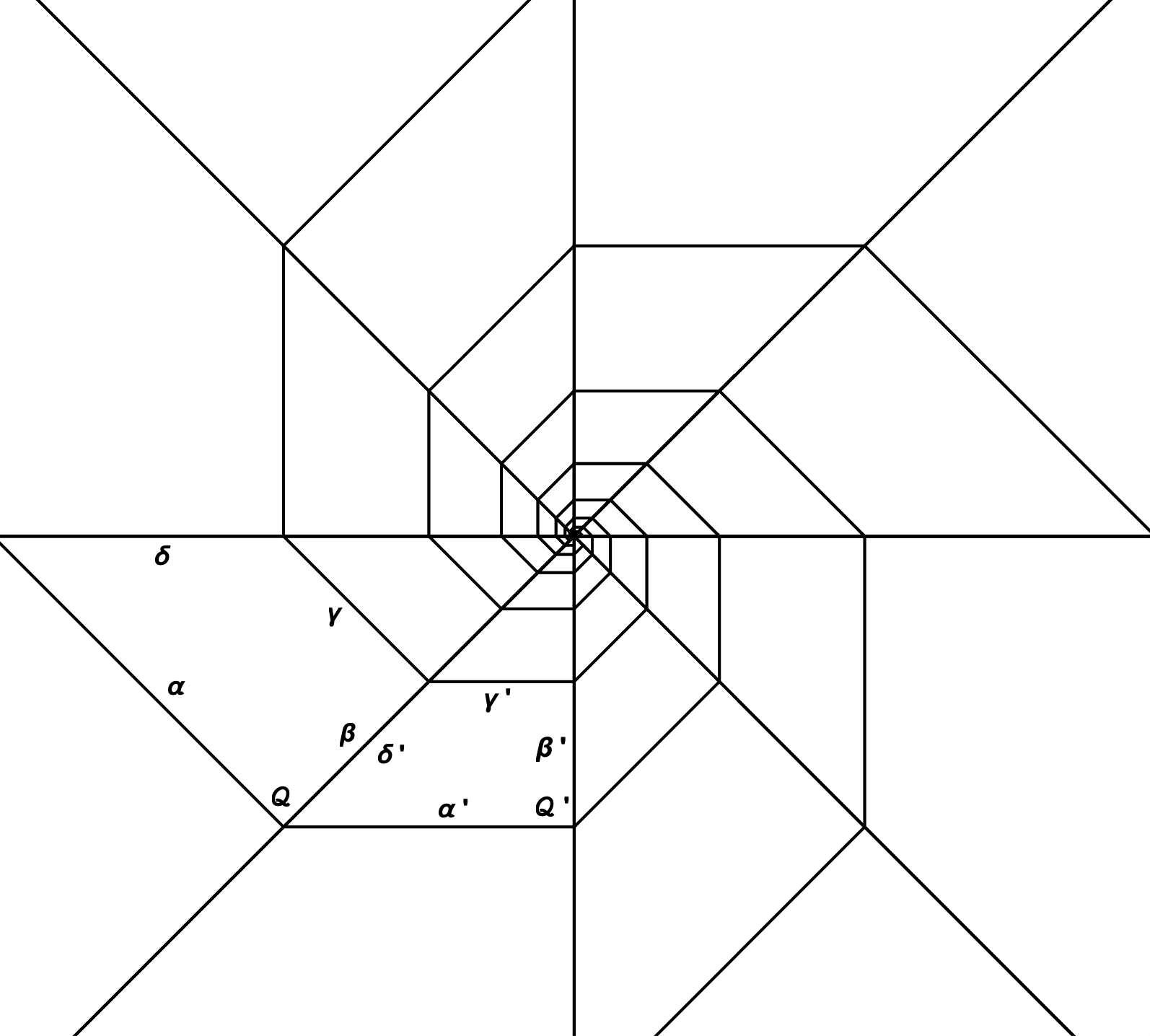}
\end{center}
\caption{The quadrilaterals $Q$ and $Q'$ are labeled with their corresponding edges in a counterclockwise fashion.  Note the edges $\beta$ and $\delta'$ are identified in the quotient.  The quadrilateral $Q$ serves as a fundamental domain for the corresponding group action.  [Made in Mathematica 12 Student Version] }
\label{fig:1}
\end{figure}
The case where both pairs of opposite edges of $Q$ are parallel is the method of identification as in Example \ref{ex1} and defines a euclidean structure on the torus.  In the case where pairs of opposite edges of $Q$ are not necessarily parallel, this method provides an inequivalent structure on the torus known as a similarity structure which are affine structures whose holonomy lies within the group of similarity transforms of affine space.  In fact, Figure \ref{fig:1} was generated by the quadrilateral $Q = \{(0,0),(2,0),(1,1),(0,1)\}$ and the group of similarity transformations generated by
\[
a = \left(\begin{array}{cc}
1/2 & 0 \\
0 & 1/2 \end{array}
\right) 
\left(\begin{array}{c}
0\\
1
\end{array}
\right)
 \text{ and }
 b = \left(\begin{array}{cc}
1 & -1 \\
1 & 1 \end{array}
\right) 
\left(\begin{array}{c}
2\\
0
\end{array}
\right)
\] 
\end{example}
%
%
%%%%% CITATION 1 BAUES
These three examples serve to illustrate the complexities that arise once one departs from the riemannian case to the affine case.  For those interested in learning more about the affine structures supported by the two torus, Oliver Baues provides an excellent treatment about different types of affine structures supported on the two torus \cite{bib1}.  \\
\\
%
%
%
%
%%%%% CITATION 2 GOLDMAN
Given an affine structure on a manifold there is a natural associated local diffeomorphism from the universal cover of $M$ to affine space and a homomorphism from $\pi_{1}(M,p) \longrightarrow \text{Aff}(n,\mathbb{R})$.  The local diffeomorphism is called the developing map, the group homomorphism is called the holonomy, and the two together are called a developing pair.  A brief description of these maps is provided below, but further details about their construction may be found in William Goldman's Geometric Structure on Manifolds \cite{bib2}.\\
\\
Base the fundamental group at a point $p \in M$.  Let $p \in (U,\Phi)$ be an affine coordinate patch about $p$.  Each path $\gamma : [0,1] \longrightarrow M$ beginning at $p$ may be assigned a point in affine space in the following fashion.  Cover the path $\gamma$ by $(k+1)$-coordinate patches beginning with $(U,\Phi)$.  Label these patches by $(U_{i},\Phi_{i})$ where $(U_{0},\Phi_{0}) = (U,\Phi)$.  Pick a mesh of times $0=t_{0} <  t_{1} <  \hdots < t_{k} < t_{k+1} = 1$ in $[0,1]$ so that each $\gamma(t_{i}) \in U_{i-1}\cap U_{i}$ for $i = 1\hdots k$.  Let $\gamma_{i}$ be the restriction of $\gamma$ to $[t_{i},t_{i+1}]$ for each $i = 0 \hdots k$.\\
\\
Inductively define paths in affine space in the following fashion.  Let $\alpha_{0} = \Phi_{0}\circ \gamma_{0}$.  Let $V_{1}$ be the connected component of $U_{0}\cap U_{1}$ containing $\gamma(t_{1})$ and $g_{0,1}$ be the affine automorphism so that $g_{0,1}|_{V_{1}} = \Phi_{0}\circ\Phi_{1}^{-1} : \Phi_{1}(V_{1}) \longrightarrow \Phi_{0}(V_{1})$.  Define $\alpha_{1} = g_{0,1}(\Phi_{1}\circ\gamma_{1})$.  Note that the initial point of $\alpha_{1}$ is the terminal point of $\alpha_{0}$.  Let $V_{2}$ be the connected component of $U_{1}\cap U_{2}$ containing $\gamma(t_{2})$ and $g_{1,2}$ be the affine automorphism so that $g_{1,2}|_{V_{2}} = \Phi_{1}\circ \Phi_{2}^{-1} : \Phi_{2}(V_{2}) \longrightarrow \Phi_{1}(V_{2})$.  Define $\alpha_{2} = g_{0,1}g_{1,2}(\Phi_{2}\circ\gamma_{2})$.  Note the initial point of $\alpha_{2}$ is the terminal point of $\alpha_{1}$.  Continue this process inductively to obtain $(k+1)$-paths into affine space and concatenate them to obtain the path
%
%
%
%
%%% EQUATION 2
\begin{align}\label{eq2}
\alpha_{0}\cdot\alpha_{1}\cdot\hdots\cdot\alpha_{k} = (\Phi_{0}\circ &\gamma_{0})\cdot(g_{0,1}(\Phi_{1}\circ \gamma_{1}))\cdot (g_{0,1}g_{1,2}(\Phi_{2}\circ\gamma_{2}))\cdot\hdots \nonumber \\
&\cdot(g_{0,1}g_{1,2}\hdots g_{k-1,k}(\Phi_{k}\circ \gamma_{k})) 
\end{align}
The developing map is defined as the terminal point of this path.  Figure \ref{fig:2} illustrates this construction with three charts.
%
%
%
%%%% FIGURE 2
\begin{figure}
\begin{center}
\includegraphics[scale=0.25]{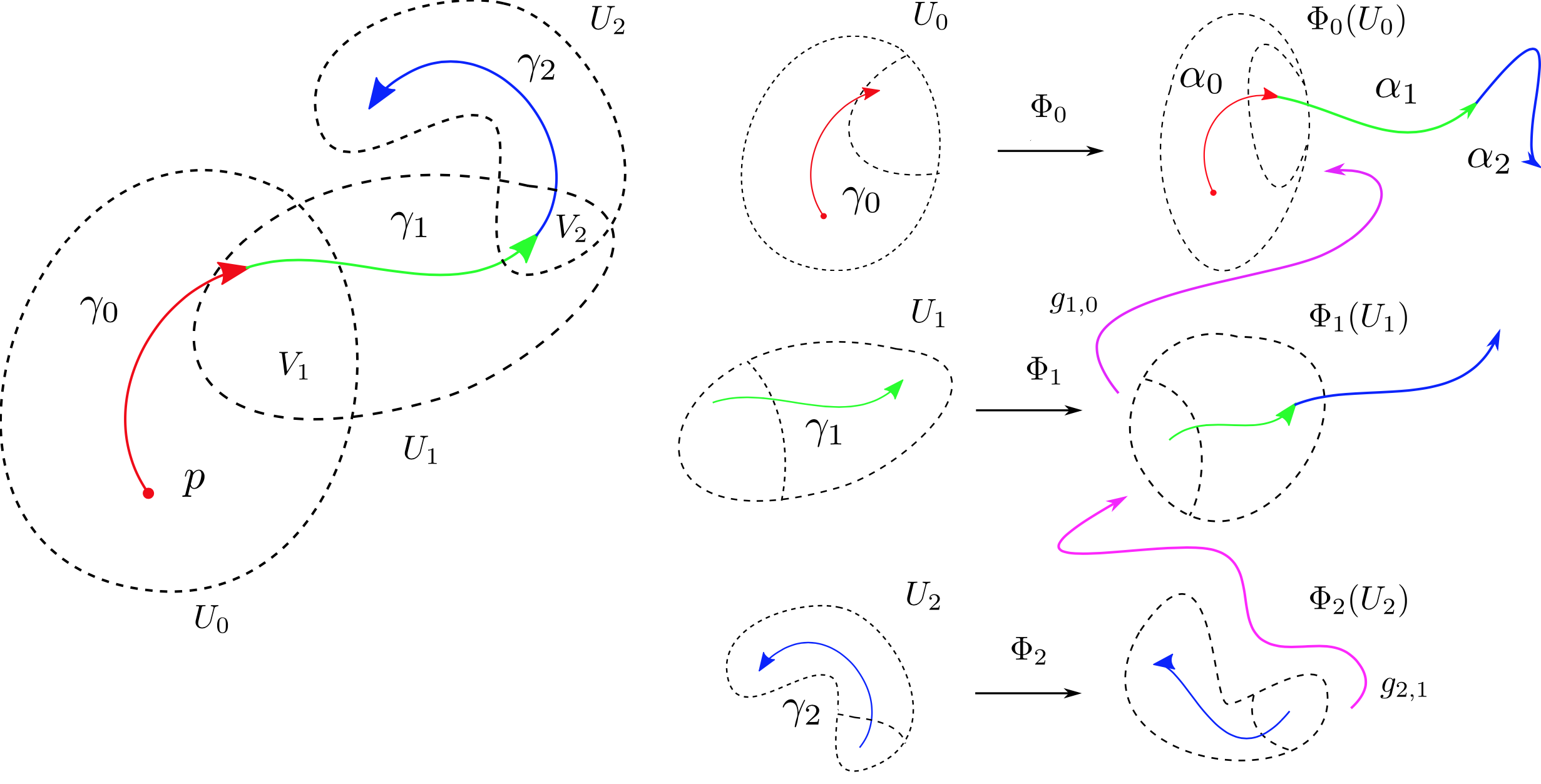}
\end{center}
\caption{Three charts $U_{0}, U_{1}$, and $U_{2}$ cover a path $\gamma$ based at $p$.  The path is separated into three pieces $\gamma_{0}, \gamma_{1}$ and $\gamma_{2}$ in red, green, and blue respectively.  The charts are shown to the right with their images in affine space.  The pink arrows represent the affine transformations taking one affine image to the next, i.e. $g_{0,1}$ and $g_{1,2}$.  For example, the blue path in the third patch is mapped to the to blue path in the second patch by $g_{1,2}$.  This construction yields the concatenation $\alpha_{1}\cdot\alpha_{2}\cdot\alpha_{3}$ in the top right whose terminal point is the developing map.}
\label{fig:2}
\end{figure}
Several facts need to be verified about this assignment.  One must show that this map is independent of the choice of charts after the initial chart about $p\in(U,\Phi)$ is chosen.  In addition, one must show that the map is well-defined up to homotopy of paths based at $p$.  After these technical details are established, this assignment induces a local diffeomorphism from the universal cover of $M$ based at $p$ to affine space which is denoted $\text{dev} : \widetilde{M} \longrightarrow \mathbb{A}^{n}$.  \\
\\
Let $\gamma$ be a path based at $p$ contained in a chart $(U,\Phi)$ as above, and let $[\beta] \in \pi_{1}(M,p)$.  As the developing map is defined in terms of homotopy classes of paths based at $p$, it is natural to consider how the developing map behaves by precomposition of loops based at $p$.  That is, one may consider how Equation \ref{eq2} changes by considering the path $\beta\cdot\gamma$ where $\beta$ is a representative of the homotopy class $[\beta]$.  As $\beta$ begins and ends at $p$, one may take the initial and terminal charts covering $\beta$ to both be $(U,\Phi)$ with charts $(V_{i},\Theta_{i})$ covering the remainder of $\beta$.  Say that the corresponding construction applied to $\beta$ yields paths $\delta_{1},\delta_{2},\hdots, \delta_{j}$ in affine space with change of coordinate elements $h_{0,1},h_{1,2},\hdots, h_{j-1,j}$.  Then the developing map applied to the concatenation $\beta\cdot \gamma$ yields
%
%
%
%%% EQUATION 3
\begin{align*}\label{eq3}
&\delta_{1}\cdot \hdots \cdot \delta_{j} \cdot \gamma_{1}\cdot \hdots \cdot \gamma_{k} = \nonumber \\
&\phantom{==} (\Phi_{0}\circ \beta_{0})\cdot(h_{0,1}(\Theta_{1}\circ \beta_{1}))\cdot (h_{0,1}h_{1,2}(\Theta_{2}\circ\beta_{2}))\cdot\hdots \nonumber\\
&\phantom{====}\cdot(h_{0,1}h_{1,2}\hdots h_{j-1,j}(\Theta_{j}\circ \beta_{j}))\cdot \gamma_{1}\cdot \hdots \cdot \gamma_{k} \nonumber
\end{align*}
As the right hand side product of the $\gamma_{i}$'s is left unchanged, and the developing map is defined as the terminal point of the constructed path above, one can see that
%
%
%
%%% EQUATION 4
\begin{equation}\label{eq4}
\left(\delta_{1}\cdot \hdots \cdot \delta_{j} \cdot \gamma_{1}\cdot \hdots \cdot \gamma_{k}\right)(1) = h_{0,1}h_{1,2}\hdots h_{j-1,j}\left(\gamma_{1}\cdot\hdots\cdot\gamma_{k}\right)(1)
\end{equation}
Since the path $\gamma$ was arbitrary, Equation \ref{eq4} holds for all such paths based at $p$, so precomposition with an element of $[\beta]\in \pi_{1}(M,p)$ yields a difference in the developing map by the element of the affine group $h_{0,1}h_{1,2}\hdots h_{j-1,j}$ which corresponds to $[\beta]$.  This element is known as the holonomy of $[\beta]$, denoted $\text{hol}[\beta]$, and defines a homomorphism, known as the holonomy map, from $\pi_{1}(M,p) \longrightarrow \text{Aff}(n,\mathbb{R})$.  Equation \ref{eq4} is the statement that the developing map is equivariant with respect to the holonomy homomorphism in the sense of the following commutative diagram.
%
%
%
%%% EQUATION 5
\begin{equation}\label{eq5}
	\begin{tikzcd}
		\widetilde{M} \arrow{r}{[\beta]} \arrow{d}[swap]{\text{dev}} & \widetilde{M} \arrow{d}{\text{dev}}\\
		\mathbb{A}^{n} \arrow{r}[swap]{\text{hol}[\beta]}& \mathbb{A}^{n}
	\end{tikzcd}
\end{equation}
%
%
%
%%%%%  CITATION 4
The pair $(\text{dev},\text{hol})$ is known as a developing pair for the affine structure on $M$.  The construction of the developing map above carries over to the broader context of $(G,X)$-structures on manifolds wherein one assumes that a lie group $G$ acts strongly effectively on a manifold $X$.  A very nice exposition about $(G,X)$-structures may be found in Stephan Schmitt's Geometric Manifolds \cite{bib4} whereas \cite{bib2} provides a very thorough general reference.

%%%%%%%%%%%%%%%%%%%%%%%%%%%%%%%%%%%%%%%%%%
%%%						       								%%%
%%% AFFINE STRUCTURES WITH AN INVARIANT LINE        		      %%%
%%%						       								%%%
%%%%%%%%%%%%%%%%%%%%%%%%%%%%%%%%%%%%%%%%%%

\section{Affine Structures with an Invariant Line}
\label{sec:2}
The purpose of this section is establish more specialized notation, preliminary observations about the consequences of having an affine structure whose affine holonomy admits an invariant affine line, and to prove a theorem about the non-existence of certain affine structures.  \\
\\
Let $l \subset \mathbb{A}^{n+1}$ be a line and $G \leq \text{Aff}(n+1,\mathbb{R})$ be the group of affine automorphisms preserving $l$.  Pick an origin on $l$ and identify $\mathbb{A}^{n+1}$ with $\mathbb{R}^{n+1}$.  Rotate about the origin so that $l$ aligns with the $x$-axis in $\mathbb{R}^{n+1}$ and $l$ identifies with the first factor of $\mathbb{R}$ in $\mathbb{R}\times\mathbb{R}^{n}$.  Up to conjugation $G$ is isomorphic to 
%
%
%
%
%%% EQUATION 6
\begin{equation}\label{eq6}
G =
\left\{
	\left(
		\begin{array}{cc}
		r & w \\
		0 & A
		\end{array}
	\right)
	\left(
		\begin{array}{c}
		d \\
		0
		\end{array}
	\right) 
	\Bigg|\,
	r \neq 0, d\in\mathbb{R}, w^{T} \in \mathbb{R}^{n}, A \in \text{GL}(n,\mathbb{R})
\right\}
\end{equation}
For this purpose of this paper the coordinates $x$ and $y^{1},\hdots, y^{n}$ are reserved for the first and second factors of $\mathbb{R}\times\mathbb{R}^{n}$ respectively.  In an abuse of notation, $\mathbb{R}$ will frequently denote the invariant line $\mathbb{R}\times0 \subset \mathbb{R}\times\mathbb{R}^{n}$.   The instances in which this occurs will be clear throughout the paper.  Before stating one of the theorems of this paper, a definition is in order.   
%
%
%
%
%
%%% DEFINITION 2
\begin{definition}\label{def2}
Let $M$ be a closed affine manifold with a developing pair $(\text{dev},\text{hol})$.  If the developing map is a covering onto $\mathbb{A}^{n}$, then $M$ is called complete.
\end{definition}
%
%
%
%%%%% CITATION 5
Since both the universal cover of $M$ and $\mathbb{A}^{n}$ are simply connected, this definition implies that the developing map is a diffeomorphism onto $\mathbb{A}^{n}$.  In this case, Corollary \ref{cor3} in Section \ref{sec:7} yields that the holonomy group, $\text{hol}(\Gamma)$, acts both properly and freely on affine space, and $M$ is diffeomorphic to $\mathbb{A}^{n}/\text{hol}(\Gamma)$.  In this context the manifold $M$ can be recovered from the image of the holonomy homomorphism.  It is a non-trivial problem to construct inequivalent structures on a manifold with the same holonomy groups.  The interested reader is referred to Projective Structure with Fuchsian Holonomy \cite{bib5}.  
%
%
%
%%% THEOREM 1
\begin{theorem}\label{thm1}
For $n \geq 1$, there are no complete affine structures on closed $(n+1)$-manifolds whose holonomy lies in the group $G$ of line preserving affine automorphisms.   
\end{theorem}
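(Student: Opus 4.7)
The plan is to use completeness to force the scalar factor $r$ in the matrix form~(\ref{eq6}) to equal $1$ for every holonomy element, thereby reducing to the ``pure translation along the invariant line'' situation announced in the abstract, and then to contradict completeness by applying the paper's main theorem.

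Assuming $M$ is a closed complete affine $(n+1)$-manifold with holonomy in $G$, Definition~\ref{def2} together with the observations immediately following it (and Corollary~\ref{cor3} of Section~\ref{sec:7}) imply that $\text{dev}: \widetilde{M} \to \mathbb{R}^{n+1}$ is a diffeomorphism and that $\Gamma := \text{hol}(\pi_{1}(M))$ acts properly and freely on $\mathbb{R}^{n+1}$, with $M \cong \mathbb{R}^{n+1}/\Gamma$. The key consequence is that every nonidentity $\gamma \in \Gamma$ is fixed-point free on $\mathbb{R}^{n+1}$. I would exploit this as follows: for any element of $G$ parametrized by $(r,w,A,d)$ as in~(\ref{eq6}), the restriction to the invariant line $\mathbb{R} \times \{0\}$ reads $(x, 0) \mapsto (rx + d, 0)$. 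If $r \neq 1$, then $rx + d = x$ has the unique solution $x_{0} = d/(1-r)$, and $(x_{0}, 0)$ is a genuine fixed point of $\gamma$ in $\mathbb{R}^{n+1}$. Freeness of the $\Gamma$-action therefore forces $r(\gamma) = 1$ for every $\gamma \in \Gamma$, so every holonomy element restricts to the invariant line as a pure translation $x \mapsto x + d(\gamma)$.

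Finally I would invoke the main theorem of the paper (the ``purely translations on the invariant line'' result advertised in the abstract): under this hypothesis the developing image must be disjoint from the invariant line. But completeness says the developing image is all of $\mathbb{R}^{n+1}$, which meets the invariant line in an entire copy of $\mathbb{R}$. This is the sought contradiction. The obstacle here is not this short reduction but the deeper translation theorem being invoked, which in turn rests on the radiant fixed-point lemma also announced in the abstract; Theorem~\ref{thm1} should emerge as a quick corollary once those tools are in place.
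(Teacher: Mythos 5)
Your first step is fine, and in fact slightly more direct than the paper's: an element of $G$ with $r \neq 1$ fixes the point $(d/(1-r),0)$ on the invariant line, so freeness of the holonomy action (available by Corollary~\ref{cor3}/Lemma~\ref{lm1} once completeness is assumed) forces $r(\gamma)=1$ for every $\gamma$; the paper reaches the same conclusion by a properness argument (inseparable orbits after conjugating the fixed point to the origin), but your freeness version is correct. The fatal problem is the final step: invoking Theorem~\ref{thm4} here is circular. In this paper Theorem~\ref{thm4} is \emph{proved from} Theorem~\ref{thm1}: its proof shows that if the developing image met the invariant line, then (via the parallel flow of Theorem~\ref{thm2} and the radial-flow Corollary~\ref{cor1}) the developing map would be a global diffeomorphism, i.e.\ the structure would be complete with holonomy in the group of Equation~\ref{eq21}, ``contradicting Theorem~\ref{thm1}.'' The flow machinery by itself only delivers completeness; it is precisely Theorem~\ref{thm1} that rules completeness out. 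So your proposal assumes the statement it is meant to prove, and no independent proof of the translation theorem is available to break the cycle.

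What is missing is the actual core of the paper's argument, which is self-contained and not flow-theoretic. After reducing to $r\equiv 1$, properness of the holonomy action on the invariant line forces the translation homomorphism $T(h)=d$ to have discrete (hence cyclic) image, and freeness forces $T$ to be injective (two elements with equal $d$ differ by an element fixing the origin); hence the holonomy group is infinite cyclic, generated by a single $h$ with linear part $A\in\mathrm{GL}(n,\mathbb{R})$ and row vector $w$. If $1$ is not an eigenvalue of $A$, a shear conjugation (Equation~\ref{eq9}) kills $w$; if $1$ is an eigenvalue and $w\neq 0$, freeness forces $w$ to annihilate the eigenvector $u$, and then the $h$-invariant plane spanned by $(1,0)$ and $(0,u)$ would descend to a closed noncompact submanifold $S^{1}\times\mathbb{R}$ of the compact quotient, a contradiction. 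In all surviving cases the quotient is the mapping torus $M_{A}$ of the linear map $A:\mathbb{R}^{n}\to\mathbb{R}^{n}$, and the Wang sequence gives $H_{n+1}(M_{A})=0$ because $\mathbb{R}^{n}$ is contractible and $n\geq 1$, contradicting the nonvanishing top homology of a closed orientable $(n+1)$-manifold (passing to the orientation double cover and the index-two subgroup $G^{+}$ at the outset, as the paper does). Without supplying this portion of the argument, your reduction does not constitute a proof.
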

This proof is broken into two pieces.  Let $H \leq G$ denote a subgroup of $G$ as in Equation \ref{eq6} that acts both properly and freely on $\mathbb{R}\times\mathbb{R}^{n}$ with a compact quotient.  The proof begins by showing that $H$ must act purely by translations on the invariant line, otherwise, there will be an accumulation point of the group action on the invariant line, contradicting properness.  After that is established, $H$ will be shown to be cylic, whereby the quotient manifold will be shown to be a mapping torus, $M_{A}$, of a linear map $A : \mathbb{R}^{n} \longrightarrow \mathbb{R}^{n}$.  Certain topological obstructions will prevent this from occurring and yield a contradiction.  
%
%
%
%%% BEGINNING OF PROOF OF THEOREM 1
\begin{proof}
Let $H \leq G$ act both properly and freely on $\mathbb{R}\times\mathbb{R}^{n}$ with a compact quotient.  Without loss of generality the scaling factor, $r \neq 0$, as in Equation \ref{eq6} may be taken to be non-negative.  The subgroup of $G^{+}\leq G$ whose scale factors $r$ on the invariant line are positive form an index two subgroup of $G$.  Consequently the quotient $\mathbb{R}\times\mathbb{R}^{n}/G^{+}$ is a double cover of the quotient by $G$, and thus preserves both an affine structure and compactness.  In addition one may lift to the orientable double cover to assume the manifold $\mathbb{R}\times\mathbb{R}^{n}/G^{+}$ is orientable.    \\
\\
If there is indeed an element $h \in H$ so that $r \neq 1$, then there is a solution to the equation $rx + d = x$.  Conjugate by translation along the invariant line to assume $h$ is of the form
%
%
%
%%%EQUATION 7
\begin{equation}\label{eq7}
h = 
\left(
	\begin{array}{cc}
	r & w \\
	0 & A
	\end{array}
\right)
\left(
	\begin{array}{c}
	0 \\
	0
	\end{array}
\right) \nonumber
\end{equation}
Note that $h$ acts linearly on $\mathbb{R}\times\mathbb{R}^{n}$ and fixes the origin.  That said, the orbit of the origin and say for example $(1,0) \in \mathbb{R}\times\mathbb{R}^{n}$ are inseparable by open sets.  If $(1,0)$ in indeed in the orbit of the origin, one may pick a point arbitrarily close to $(1,0)$ on the invariant line that is not.  The cyclic group generated by $h$ fixes the origin, whereas $h^{n}(1,0)$ will tend arbitrarily close to the origin along the invariant line for sufficiently large positive or negative values of $n$ depending on whether $r$ is less than or bigger than one.  The orbits of the origin and $(1,0)$ are inseparable so the action is not proper, and thus elements that fail to act by pure translation on the invariant line are not in $H$.  \\
\\
Now assume that $H$ acts by pure translations on the invariant line $\mathbb{R}$.  Since the action is proper, it follows that map $T : H \longrightarrow \mathbb{R}$ defined by
%
%
%
%%%% EQUATION 8
\begin{equation}\label{eq8}
T\left(\left(
	\begin{array}{cc}
	1 & w \\
	0 & A
	\end{array}
\right)
\left(
	\begin{array}{c}
	d \\
	0
	\end{array}
\right) \right)= d \nonumber
\end{equation}
is a homomorphism.  By properness of the action of $H$, the image of $T$ is cyclic, as a dense image would yield an accumulation point on the invariant line.  Since the action of $H$ on $\mathbb{R}\times\mathbb{R}^{n}$ is free, this homomorphism is injective.  For if two elements $h,h' \in H$ yield the same translational part, then $h^{-1}h'$ fixes the origin in $\mathbb{R}\times\mathbb{R}^{n}$, and by freeness $h = h'$.  Thus $H$ is generated by some 
\[
h = \left(
	\begin{array}{cc}
	1 & w \\
	0 & A
	\end{array}
\right)
\left(
	\begin{array}{c}
	d \\
	0
	\end{array}
\right) \text{ where } d\in\mathbb{R}, \, w^{T} \in \mathbb{R}^{n} \text{ and } A \in \text{GL}(n,\mathbb{R})
\]
Note if $w = 0$, then $\mathbb{R}\times\mathbb{R}^{n}$ quotiented by the cyclic group $H$ is homeomorphic to the mapping cylinder of the linear map $A:\mathbb{R}^{n} \longrightarrow \mathbb{R}^{n}$ as claimed.  If $w\neq 0$, then conjugating $h$ by a sheer along $\mathbb{R}$ yields
%
%
%
%%% EQUATION 9
\begin{equation}\label{eq9}
\left(
	\begin{array}{cc}
	1 & v \\
	0 & I_{n}
	\end{array}
\right)
\left(
	\begin{array}{cc}
	1 & w \\
	0 & A
	\end{array}
\right)
\left(
	\begin{array}{c}
	d \\
	0
	\end{array}
\right)
\left(
	\begin{array}{cc}
	1 & -v \\
	0 & I_{n}
	\end{array}
\right) = 
\left(
	\begin{array}{ccc}
	1 & \phantom{,}& w+v(A-I_{n}) \\
	0 &  \phantom{,}& A
	\end{array}
\right)
\left(
	\begin{array}{c}
	d \\
	0
	\end{array}
\right)
\end{equation}
If $(A-I_{n})$ is invertible then $w + v(A-I_{n})$ can be chosen to be zero, thus providing the desired homeomorphism.  Such a choice of $v$ is available if $(A-I_{n})$ is invertible, which is to say that $\lambda = 1$ is not an eigenvalue of $A$. \\
\\
Assume $w \neq 0$ and $\lambda = 1$ is an eigenvalue of $A$.  If so, then there is a $u \in \mathbb{R}^{n}$ for which $Au = u$.  Moreover,  $w^{T}$ and $u$ are necessarily perpendicular.  For if not let $k \in \mathbb{R}$, and then
%
%
%
%%% EQUATION 10
\begin{equation}\label{eq10}
 \left(
	\begin{array}{cc}
	1 & w \\
	0 & A
	\end{array}
\right)
\left(
	\begin{array}{c}
	d \\
	0
	\end{array}
\right) 
\left(
	\begin{array}{c}
	0 \\
	ku
	\end{array}
\right) =
\left(
	\begin{array}{c}
	0+k(wu)+d \\
	ku
	\end{array}
\right) \nonumber
\end{equation}
where $A(ku) = ku$ as $u$ is an eigenvector of $A$ with eigenvalue $1$. Since $wu \neq 0$ as $w^{T}$ and $u$ are not perpendicular, there is a choice of $k$ for which $k(wu) + d  = 0$ and thus $(0,ku)$ is fixed by a generator of $H$ contradicting the fact that $H$ acts freely on $\mathbb{R}\times\mathbb{R}^{n}$.  Thus, $w^{T}$ and $u$ are perpendicular as claimed.\\
\\
Let $U$ denote the plane in $\mathbb{R}\times\mathbb{R}^{n}$ spanned by $(1,0)$ and $(0,u)$.  Since $w^{T}$ and $u$ are orthogonal, $U$ is a closed subspace invariant under the action of $H$, so its quotient $U/H$ is a compact submanifold of $\mathbb{R}\times\mathbb{R}^{n}/H$.  This though is a contradiction as $U/H$ is diffeomorphic to $S^{1}\times\mathbb{R}$, and is therefore non-compact.   Since $w$ was assumed to be non-zero, this necessitates $\lambda = 1$ is not an eigenvalue of $h$.\\
\\
Since $\lambda = 1$ is not an eigenvalue of $h$, one may conjugate by translation along the invariant line as in Equation \ref{eq9} to assume $h$ is a matrix of the form
\[
h =  \left(
	\begin{array}{cc}
	1 & 0 \\
	0 & A
	\end{array}
\right)
\left(
	\begin{array}{c}
	d \\
	0
	\end{array}
\right) 
\]
%
%
%
% CITATION 6 HATCHER
Thus the quotient $\mathbb{R}\times\mathbb{R}^{n}/H$ is homeomorphic to $M_{A}$, the mapping torus of the linear map $A : \mathbb{R}^{n} \longrightarrow \mathbb{R}^{n}$.  A standard result in topology \cite{bib6} provides the long exact sequence of homology groups of a mapping torus is given by
%
%
%
%%% EQUATION 11
%
%
%
\begin{equation}\label{eq11}
\hdots \longrightarrow H_{n+1}(\mathbb{R}^{n}) \longrightarrow H_{n+1}(M_{A}) \longrightarrow H_{n}(\mathbb{R}^{n}) \longrightarrow \hdots \nonumber
\end{equation}
Since $\mathbb{R}^{n}$ is contractible and $n \geq 1$, this necessitates that $H_{n+1}(M_{A})$ is trivial.  Since $M_{A}$ is an $(n+1)$-dimensional, compact, oriented manifold, its top homology is necessarily non-trivial.  This shows that there are no complete structures on a compact $(n+1)$-dimensional affine manifold whose affine holonomy preserves an affine line.
\end{proof}
It is worth noting that although Theorem \ref{thm1} forbids the existence of a complete structure on a closed manifold $M$ whose holonomy preserves an invariant line, this theorem says nothing about the existence of non-complete structures.  In fact, there are plenty of examples of non-complete structures in which the developing map fails to be a covering onto all of $\mathbb{R}\times\mathbb{R}^{n}$.  
%
%
%
%%% EXAMPLE 4
\begin{example}\label{ex4}
Pick a $\lambda > 1$ and let $M$ be $\mathbb{R}\times \mathbb{C}^{\times}/H$ where $H$ is the subgroup of affine transformations generated by
%
%
%
%%% EQUATION 12
\begin{equation}\label{eq12}
a = 
\left(
	\begin{array}{ccc}
	1 & 0 & 0 \\
	0 & 1 & 0 \\
	0 & 0 & 1
	\end{array}
\right)
\left(
	\begin{array}{c}
	1\\
	0\\
	0
	\end{array}
\right) \text{ and }
b = 
\left(
	\begin{array}{ccc}
	1 & 0 & 0 \\
	0 & \lambda & 0 \\
	0 & 0 & \lambda
	\end{array} \nonumber
\right)
\end{equation}
$M$ defines an affine structure on the three-torus whose affine holonomy preserves the invariant line defined by the $x$-axis.  In this case the induced action on the invariant line $\mathbb{R}$ is purely translational, yet the invariant line lies entirely outside the developing image of this affine structure.  
\end{example}
The fact that the invariant line lies outside the developing image is no coincidence in the case where the affine holonomy acts purely by translations on the invariant line.  The remainder of this paper is dedicated to showing this is always the case.  \\
\\
The basic strategy is to show that if the affine holonomy does indeed act by translations on the invariant line and the developing image meets the invariant line, the affine structure is complete thus yielding a contradiction to Theorem \ref{thm1}.  To show the developing map is a diffeomorphism, two main techniques will be employed.  \\
\\
The first is show that if the affine manifold admits a parallel flow, one can construct `large' open submanifolds of the universal cover upon which the restricted developing map is a diffeomorphism.  The second is to show that if the manifold admits a so called `cylindrical' flow, these `large' open sets can be taken to be arbitrarily large.  Once these two facts are established, the proof follows immediately as a consequence of Theorem \ref{thm1}.

%%%%%%%%%%%%%%%%%%%%%%%
%%%						     %%%
%%% PARALLEL FLOW        		      %%%
%%%						       %%%
%%%%%%%%%%%%%%%%%%%%%%%%

\section{Parallel Flow}
\label{sec:3}
Let $M$ be a closed affine $(n+1)$-dimensional manifold with a developing pair $(\text{dev},\text{hol})$ and fundamental group $\Gamma = \pi_{1}(M,p)$ acting on the universal cover $\widetilde{M}$ satisfying Equation \ref{eq5}.  Assume the affine holonomy group, $H = \text{hol}(\Gamma) \leq \text{Aff}(n+1,\mathbb{R})$, preserves a parallel vector field $V$.  Pick an origin in $\mathbb{A}^{n+1}$, and identify $\mathbb{A}^{n+1}$ with $\mathbb{R}\times\mathbb{R}^{n}$ and $\text{Aff}(n+1,\mathbb{R})$ with $\text{GL}(n+1,\mathbb{R})\ltimes \mathbb{R}^{n+1}$.  By the natural identification of parallel vector fields $V$ on $\mathbb{R}\times\mathbb{R}^{n}$ with the tangent space of $\mathbb{R}\times\mathbb{R}^{n}$ at a point, the statement that the affine holonomy preserves a parallel vector field is equivalent to the statement that there exists a $v \in T_{0}(\mathbb{R}\times\mathbb{R}^{n})$ so that for each $h \in H$, $v$ is an eigenvector of the linear part of $h$.  Thus, up to conjugation, one may assume the affine holonomy lies inside the subgroup of affine automorphisms given by    
%
%
%
%%% EQUATION 13
\begin{equation}\label{eq13}
P=  \left\{
	\left(
		\begin{array}{ccc}
		1 & w\\
		0 & A
		\end{array}
	\right)
	\left(
		\begin{array}{c}
		d \\
		v
		\end{array}
	\right)  \,  \Bigg| \, d\in\mathbb{R}, w^{T}, v \in \mathbb{R}^{n}, A\in \text{GL}(n,\mathbb{R}) 
\right\}
\end{equation}
The non-vanishing parallel $H$-invariant vector field $\partial/\partial x$ lifts to a non-vanishing parallel $\Gamma$-invariant vector field $\widetilde{V}$ which descends to a non-vanishing parallel vector field $V$ on $M$.  As $M$ is compact, the flow associated to $V$ is complete, and so too is the flow associated to $\widetilde{V}$.  Denote this flow on $\widetilde{M}$ by $\widetilde{\Theta}_{t}$.  As $\widetilde{V}$ is related to $\partial/\partial x$ by the developing map, so too are their corresponding flows.  Denoting $T_{t}$ as the translational flow corresponding to $\partial/\partial x$, one obtains the commutative diagram
%
%
%
% EQUATION 14
\begin{equation}\label{eq14}
	\begin{tikzcd}
		&\widetilde{M} \arrow{r}{\widetilde{\Theta}_{t}} \arrow{d}[swap]{\text{dev}} &\widetilde{M} \arrow{d}{\text{dev}}\\
		&\mathbb{R}\times\mathbb{R}^{n} \arrow{r}[swap]{T_{t}}&\mathbb{R}\times\mathbb{R}^{n}
	\end{tikzcd} \nonumber
\end{equation}
As $T_{t}$ is the flow associated the vector field $\partial/\partial x$ which is invariant under the holonomy, this necessitates that $T_{t}$ commutes with each element of the holonomy as the holonomy will take flow lines to flow lines.  The same statement holds for the commutativity of $\widetilde{T}_{t}$ and $\Gamma$.  \\
\\
It is clear that the $\mathbb{R}$-action on $\mathbb{R}\times\mathbb{R}^{n}$ given by translation $T_{t}$ is both free and proper, consequently, so too is the $\mathbb{R}$-action on $\widetilde{M}$ by Lemma \ref{lm2}.  Thus $\widetilde{M}$ is a principal $\mathbb{R}$-bundle over the quotient manifold $\widetilde{M}/\mathbb{R}$.  Denote this quotient manifold by $N$ and the associated quotient map by $q : \widetilde{M} \longrightarrow N$.  Since $\mathbb{R}$ is contractible, the principal $\mathbb{R}$-bundle structure on $\widetilde{M}$ is isomorphic to the trivial bundle $p_{2} : \mathbb{R}\times N \longrightarrow N$ where $p_{2}$ is factor projection onto the second factor.  By triviality of the principal bundle, there is an $\mathbb{R}$-equivariant diffeomorphism $\Phi : \widetilde{M} \longrightarrow \mathbb{R}\times N$ for which the below diagrams commute.
%
%
%
% EQUATION 15
\begin{equation}\label{eq15}
	\begin{tikzcd}
	\widetilde{M} \arrow{dr}[swap]{q} \arrow{rr}{\Phi} & 		&  \arrow{dl}{p_{2}} \mathbb{R}\times N  \\
									& N  \\
	\end{tikzcd}
	\text{ and }
	\begin{tikzcd}
		&\widetilde{M} \arrow{r}{\widetilde{\Theta}_{t}} \arrow{d}[swap]{\Phi} &\widetilde{M} \arrow{d}{\Phi}\\
		&\mathbb{R}\times N \arrow{r}[swap]{\widetilde{T}_{t}}&\mathbb{R}\times N
	\end{tikzcd}
\end{equation}
%
%
%
% CITE STANDARD RESULT PRINCIPAL BUNDLES
A standard result in the theory principal bundles states that a principal bundle is trivial if and only if the bundle admits a global section, and thus $N$ may be thought of as an embedded submanifold of $N \subset \widetilde{M}$ for which the saturation of $N$ by the $\mathbb{R}$-action yields all of $\widetilde{M}$.  \\
\\
The identification of $\Phi : \widetilde{M} \longrightarrow \mathbb{R}\times N$ provides a $\Gamma$-action on $\mathbb{R}\times N$ via conjugation by $\Phi$.  Specifically, $[\gamma](t,n) = (\Phi\circ [\gamma] \circ \Phi^{-1})(t,n)$.  The $\Gamma$-action on $\mathbb{R}\times N$ commutes with the $\mathbb{R}$-action on $\mathbb{R}\times N$ as per consequence of the definition of the $\Gamma$-action on $\mathbb{R}\times N$ and Equation \ref{eq15}.  In addition, one obtains the commutative diagram
%
%
%
% EQUATION 16
\begin{equation}\label{eq16}
	\begin{tikzcd}
	 \mathbb{R} \times N \arrow{r}{[\gamma]} \arrow{d}[swap]{\Phi^{-1}} &  \mathbb{R} \times N \arrow{d}{\Phi^{-1}} \\
	\widetilde{M} \arrow{r}{[\gamma]} \arrow{d}[swap]{\text{dev}}& \widetilde{M} \arrow{d}{\text{dev}} \\
	\mathbb{R}\times\mathbb{R}^{n} \arrow{r}[swap]{\text{hol}[\gamma]} & \mathbb{R}\times\mathbb{R}^{n}
	\end{tikzcd}
\end{equation}
The composition of $\text{dev}\circ \Phi^{-1}$ provides a local diffeomorphism of $\mathbb{R}\times N$ into $\mathbb{R}\times\mathbb{R}^{n}$ which is equivariant with respect to the $\Gamma$-action on $\mathbb{R}\times N$.  In the standard abuse of notation, this composition is also denoted $\text{dev} : \mathbb{R}\times N \longrightarrow \mathbb{R}\times\mathbb{R}^{n}$.  \\
\\
By the $\mathbb{R}$-equivariance of $\Phi$ as in Equation \ref{eq15}, one obtains the commutative diagram
%
%
%
% EQUATION 17
\begin{equation}\label{eq17}
	\begin{tikzcd}
		&\mathbb{R}\times N \arrow{d}[swap]{\text{dev}} \arrow{r}{\widetilde{T}_{t}} & \mathbb{R}\times N\arrow{d}{\text{dev}}  \\
		&\mathbb{R}\times\mathbb{R}^{n} \arrow{r}[swap]{T_{t}} & \mathbb{R}\times\mathbb{R}^{n}
	\end{tikzcd}
\end{equation}
Before continuing, it is worth noting that Equation \ref{eq17} is more or less the statement that there exists a transverse submanifold $N$ that generates the $\mathbb{R}$-action on the universal cover, $\mathbb{R}\times N$, in such a fashion that that the developing map takes flow lines to flow lines.  As both $\mathbb{R}$-actions are free and proper, one may pass the vertices of Equation \ref{eq17} to their quotients.  As both the $\Gamma$-action on $\mathbb{R}\times N$ and the holonomy on $\mathbb{R}\times\mathbb{R}^{n}$ commute with their respective $\mathbb{R}$-actions, the actions of $\Gamma$ and the holonomy group pass to their quotients, which are also abusively denoted by $[\gamma]$ and $\text{hol}[\gamma]$.  Specifically, one has the commutative square
%
%
%
% EQUATION 18
\begin{equation}\label{eq18}
	\begin{tikzcd}
	        & N \arrow{d}[swap]{\overline{\text{dev}}} \arrow{r}{[\gamma]} & N\arrow{d}{\overline{\text{dev}}}  \\
		&\mathbb{R}^{n} \arrow{r}[swap]{\text{hol}[\gamma]} & \mathbb{R}^{n}
	\end{tikzcd}
\end{equation}
where $\overline{\text{dev}} : N \longrightarrow \mathbb{R}^{n}$ is a local diffeomorphism, and the holonomy $\text{hol}[\gamma]$ is acting affinely on $\mathbb{R}^{n}$ by the affine action induced by the matrices and vectors $A \in \text{GL}(n,\mathbb{R})$ and $v\in\mathbb{R}^{n}$ as in Equation \ref{eq13}.  \\
\\
Equation \ref{eq18} looks deceptively as though it defines an affine structure on $N/\Gamma$.  This though assumes that the induced action of $\Gamma$ on $N$ is both free and proper, which is not necessarily the case.  The following example below illustrates possible obstructions when passing the quotient.
%
%
%
%%% EXAMPLE 5
%
%
\begin{example}\label{ex5}
Let $D$ be a closed unit disk centered at the origin in $\mathbb{R}^{2}$.  Let $\Gamma$ be the cyclic group acting on $\mathbb{R}\times D$ generated by translation along $\mathbb{R}$ and rotation by an irrational angle $\theta$.  For example let $\Gamma$ be generated by
\[
a = \left(
\begin{array}{ccc}
1 & 0 & 0\\
0 & \cos(\theta) & -\sin(\theta) \\
0 & \sin(\theta) & \cos(\theta)
\end{array}
\right)
\left(
\begin{array}{c}
1\\
0\\
0
\end{array}
\right)
\]
\end{example}
Let $M = \mathbb{R}\times D/\Gamma$ which is a compact three-dimensional manifold with boundary as the action of $\Gamma$ on $\mathbb{R}\times D$ is both free and proper.  In fact, $M$ is the mapping torus of the map $a : D \longrightarrow D$.  Since the holonomy $\Gamma$ preserves the vector field $\partial/\partial x$, $\Gamma$ maps flow lines to flow lines whose images are $\mathbb{R}\times p$ for each $p \in D$.  \\
\\
The induced action of $\Gamma$ on $D$ is neither free no proper.  In particular the induced action of $a$ on the unit disk preserves the flow line of the origin, and is consequently not free.  In addition, the orbit of each point $p \in D$ with a radius $0 < r \leq 1 $ has an orbit that is dense on the corresponding circle of radius $r$, and consequently the induced action of $\Gamma$ on $D$ is not proper.  Though this is example with boundary, it nevertheless conveys the fragility of proper actions.  Proper actions in general do not pass to proper actions on quotients.  This should be contrasted with case where a proper action is lifted, see for example Lemma \ref{lm2}.  \\
\\
As mentioned towards the end of Section \ref{sec:2}, the goal of this section is to prove the existence of `large' open subsets upon which the developing map when restricted to them are diffeomorphisms.  As all the necessary terminology is in place, the theorem may be stated.
%
%
%
%%%%% THEOREM 2
\begin{theorem}\label{thm2}
Let $M$ be a closed affine $(n+1)$-dimensional manifold whose affine holonomy admits a parallel vector field.  There exists a complete parallel flow on the universal cover of $M$ equivariant with respect to the parallel flow induced by the parallel vector field on affine space.  Additionally, there exists open subsets of the universal cover invariant under the parallel flow for which the restricted developing map is a diffeomorphism onto its image.  
\end{theorem}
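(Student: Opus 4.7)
The plan is to reduce the statement to a straightforward consequence of the product decomposition $\widetilde{M} \cong \mathbb{R} \times N$ set up in Equation \ref{eq15} together with the flow equivariance of Equation \ref{eq17}. The first assertion is essentially established in the discussion preceding the theorem: the parallel vector field $\partial/\partial x$ on affine space is $H$-invariant, so it descends via the affine charts to a nowhere-vanishing parallel vector field $V$ on $M$; compactness of $M$ makes the flow of $V$ complete, hence its lift $\widetilde{\Theta}_t$ to $\widetilde{M}$ is complete as well, and $\text{dev} \circ \widetilde{\Theta}_t = T_t \circ \text{dev}$ because $V$ is by construction locally $\text{dev}$-related to $\partial/\partial x$.

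For the second assertion, I would first extract a normal form for $\text{dev}: \mathbb{R} \times N \to \mathbb{R} \times \mathbb{R}^n$. Writing $\text{dev}(t,q) = (a(t,q), b(t,q))$ and exploiting $\text{dev} \circ \widetilde{T}_s = T_s \circ \text{dev}$, a short computation gives $a(t,q) = t + f(q)$ and $b(t,q) = \overline{\text{dev}}(q)$ for some smooth $f: N \to \mathbb{R}$. Since the full differential of $\text{dev}$ is everywhere invertible, inspection of its block-triangular form forces $d\overline{\text{dev}}$ to be invertible at every point, so $\overline{\text{dev}}$ is a local diffeomorphism, consistent with the square in Equation \ref{eq18}.

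Any open subset of $\widetilde{M} \cong \mathbb{R} \times N$ invariant under the parallel flow has the form $U = \mathbb{R} \times W_0$ for an open $W_0 \subset N$, and the normal form makes $\text{dev}|_U : (t,q) \mapsto (t + f(q), \overline{\text{dev}}(q))$ a diffeomorphism onto its image precisely when $\overline{\text{dev}}|_{W_0}$ is a diffeomorphism onto its image. Since $\overline{\text{dev}}$ is a local diffeomorphism, every point of $N$ admits a neighborhood on which $\overline{\text{dev}}$ restricts to a diffeomorphism, so flow-invariant open sets of the required type certainly exist.

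The step I expect to require the most care is extracting the normal form and verifying that the induced $\overline{\text{dev}}$ really is a local diffeomorphism on $N$ rather than just a continuous quotient; the remainder is essentially formal. Although the theorem as stated only asserts existence, the real interest presumably lies in taking $W_0$ as large as possible, and a natural candidate is to fix a basepoint $p \in N$ and a convex open $V \subset \mathbb{R}^n$ containing $\overline{\text{dev}}(p)$, then take the connected component of $\overline{\text{dev}}^{-1}(V)$ containing $p$. Whether this component actually maps diffeomorphically onto $V$ is a covering-space question that is not automatic, and this appears to be where the cylindrical flow machinery promised for the next section is meant to take over.
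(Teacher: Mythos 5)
Your proposal is correct and follows essentially the same route as the paper: both use the trivialization $\widetilde{M}\cong\mathbb{R}\times N$ and the flow-equivariance of $\text{dev}$ to reduce injectivity on a flow-saturated set $\mathbb{R}\times W_{0}$ to injectivity of $\overline{\text{dev}}$ on the transversal piece $W_{0}$, then take $W_{0}$ small enough that $\overline{\text{dev}}|_{W_{0}}$ is a diffeomorphism onto its image. Your explicit normal form $\text{dev}(t,q)=(t+f(q),\overline{\text{dev}}(q))$ is just a more computational presentation of the paper's argument via the quotient square and freeness of the $\mathbb{R}$-actions, with the added (harmless) bonus of re-deriving that $\overline{\text{dev}}$ is a local diffeomorphism.
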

%
%
%
%PROOF OF THEOREM 2
\begin{proof}
The previous paragraphs establish the existence of a complete parallel flow satisfying the equivariance condition in the statement of the theorem.  To finish the proof of Theorem 2, it suffices to show the existence of the open subset of $\mathbb{R}\times N$ invariant under the flow so that the restricted developing map is a diffeomorphism onto its image.\\
\\
Form the commutative squares as in Equation \ref{eq16} and Equation \ref{eq18}, and pick a point $y$ in the image of $\overline{\text{dev}}(N)$.  As $\overline{\text{dev}}$ is a local diffeomorphism, the fibre over $y$ is a discrete subset of $N$.  Pick a point $n \in N$ so that $\overline{\text{dev}}(n) = y$, and let $n \in U$ be an open set so that $\overline{\text{dev}}|_{U}$ is a diffeomorphism onto its image $ \overline{\text{dev}}(U)$.  Let $\widetilde{U} = p_{2}^{-1}(U) = \mathbb{R}\times U \subset \mathbb{R} \times N$.\\
\\
Since $\widetilde{U}$ is open in $\mathbb{R}\times N$, to show that $\text{dev}|_{\widetilde{U}}$ is a diffeomorphism onto its image, it suffices to show that the developing map is injective on $\widetilde{U}$.  To this end, let $(t,m)$ and $(s,m')$ be points in $\widetilde{U}$ so that $\text{dev}(t,m) = \text{dev}(s,m')$.  This implies that $(\text{dev}\circ \widetilde{T}_{t})(0,m) = (\text{dev}\circ \widetilde{T}_{s})(0,m')$ and by the definition of $\overline{\text{dev}}$ and Equation \ref{eq17}, one obtains that $\overline{\text{dev}}(m) = \overline{\text{dev}}(m')$.  Since $m,m' \in U$, by construction, this necessitates that $m = m'$.  Consequently $(t,m)$ and $(s,m')$ are in the same $\mathbb{R}$-orbit.  By freeness of the Equation \ref{eq17} and freeness of the $\mathbb{R}$-actions, this necessitates that $s = t$ so $(t,m) = (s,m')$.  Consequently the developing map restricted to $\widetilde{U}$ is diffeomorphism.  
\end{proof}
To summarize, the basic idea of Theorem \ref{thm2} is that there exists a codimension one submanifold $N \subset \widetilde{M}$ transverse to the parallel flow on $\widetilde{M}$ that generates the $\mathbb{R}$-action on $\widetilde{M}$.  In addition, the developing map sends flow lines to flow lines.  The deck transformations, holonomy, and developing map all factor through the corresponding $\mathbb{R}$-actions to yield a local diffeomorphism on $N$ that provide coordinate charts on $N$.  Since $N \subset \widetilde{M}$, these transverse coordinate charts may be saturated by the parallel flow and the saturations are still diffeomorphisms as the flow lines are sent to flow lines, which never loop around and thus provide `large' open subsets so that the restricted developing map is a diffeomorphism onto its image.  Figure \ref{fig:3} provides an illustration of this argument.\\
\\
%
%
%
%%%% FIGURE 3
\begin{figure}
\begin{center}
\includegraphics[scale=0.3]{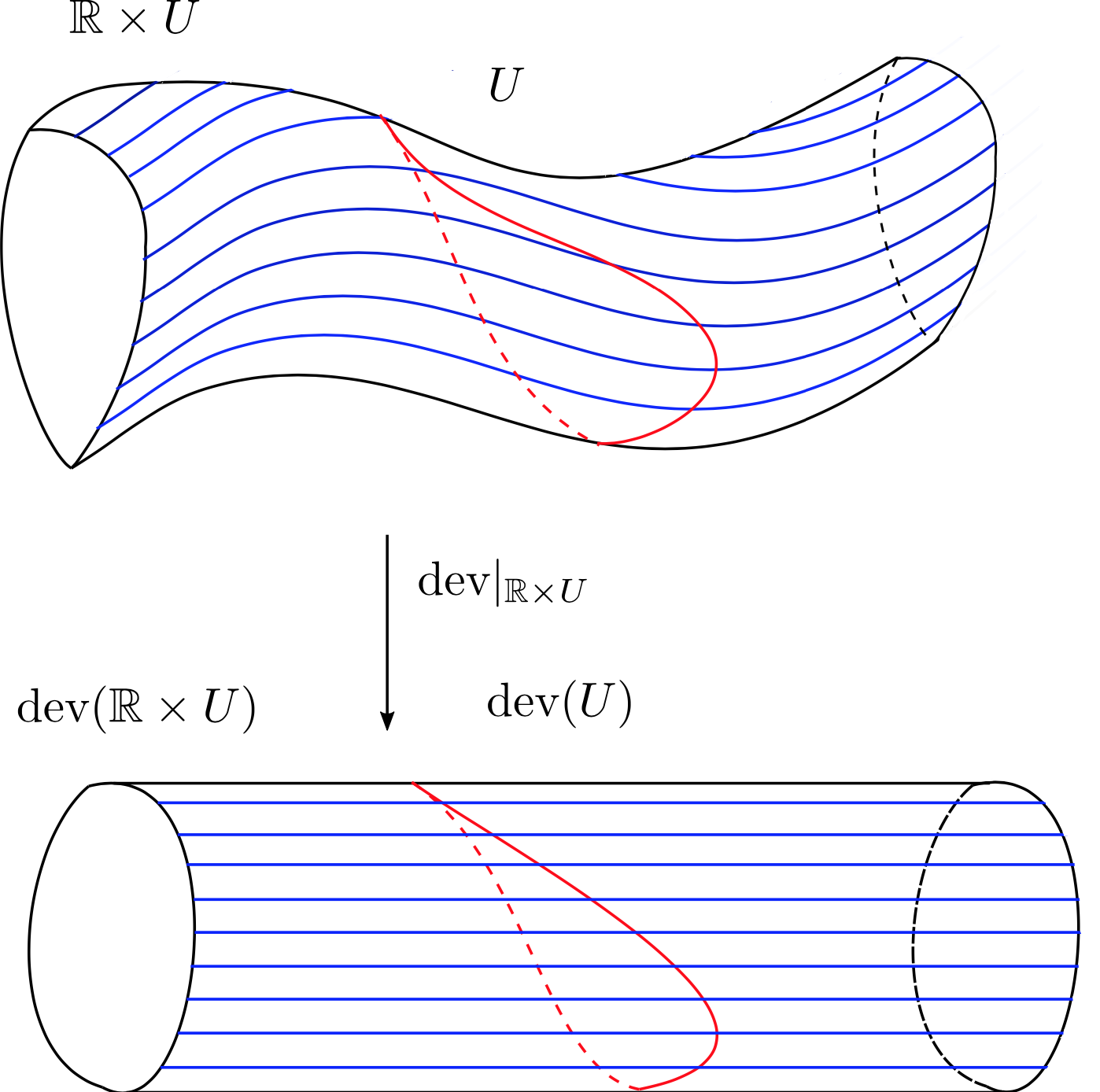}
\end{center}
\caption{Here $U$ is an open subset in $N$ labeled in red for which $\overline{\text{dev}}|_{U}$ is a diffeomorphism onto its image.  The map $\overline{\text{dev}}$ is the induced map on the collapsed blue flow lines.  Saturating $U$ by the $\mathbb{R}$-action yields the open subset $\mathbb{R}\times U \subset \widetilde{M}$ such that the developing map restricted to it is a diffeomorphism.}
\label{fig:3}
\end{figure}
In fact, as one can see in the proof of Theorem \ref{thm2}, the injectivity argument works on any subset $U \subset N$ for which $\overline{\text{dev}}|_{U}$ is a diffeomorphism onto its image.  That said, the failure of the original developing map to be a diffeomorphism onto its image is entirely determined by the failure of $\overline{\text{dev}}$ to be a diffeomorphism.

%%%%%%%%%%%%%%%%%%%%%%%
%%%						     %%%
%%% RADIAL FLOW        		      %%%
%%%						       %%%
%%%%%%%%%%%%%%%%%%%%%%%%

\section{Radial Flow}
\label{sec:4}

Similar to Section \ref{sec:3}, let $M$ be a closed affine $n$-dimensional manifold with fundamental group $\Gamma = \pi_{1}(M,p)$ acting on the universal cover by deck transformations.  Pick a developing pair $(\text{dev},\text{hol})$ for the affine structure on $M$.  As opposed to the previous section, instead of assuming the linear holonomy fixes a common vector, this section explores some consequences of when the affine holonomy fixes a point in $\mathbb{A}^{n}$.  Pick said fixed point as the origin and make the standard identification of $\mathbb{A}^{n}$ with $\mathbb{R}^{n}$.  Up to conjugation, one may assume the holonomy lies inside the group of linear transformations $\text{GL}(n,\mathbb{R})$.  These class of manifolds are of special interest in the study of geometric structures, so much so that they are provided their own name.
%
%
%
%DEFINITION 3
\begin{definition}\label{def3}
A radiant manifold $M$ is an affine manifold whose affine holonomy fixes a point in $\mathbb{A}^{n}$.  This is equivalent to the condition that the affine holonomy is conjugate to a subgroup of $\text{GL}(n,\mathbb{R})$.
\end{definition}
%
%
%
%%%% EXAMPLE 6
\begin{example}\label{ex6}
The affine structure on the torus given in Example \ref{ex2} provides the torus with a radiant structure, whereas the structure in Example \ref{ex1} is not radiant.  The structure given in Example \ref{ex3}, specifically from Figure \ref{fig:1}, also provides a radiant structure on the torus.  This structure is inequivalent to the one in Example \ref{ex2}, as the holonomy in Example \ref{ex3} is non-cyclic. 
\end{example}
%
%
%
%%%% EXAMPLE 7
%
%
%
% 
\begin{example}\label{ex7}
The structure in Example \ref{ex2} can be generalized in the following fashion.  Consider the puncture euclidean space $\mathbb{R}^{n}\setminus 0$.  Let $H$ be a group generated by a positive homothety induced by some $\lambda > 0$.  Then $\mathbb{R}^{n}/H$ is readily seen to be diffeomorphic to $S^{1}\times S^{n-1}$.  These manifolds, known classically as hopf manifolds, provide a class of examples of radiant manifolds with cyclic holonomy.  An illustration of the identification is provided in Figure \ref{fig:4}.
%
%
%
% FIGURE 4
\begin{figure}
\begin{center}
\includegraphics[scale=0.3]{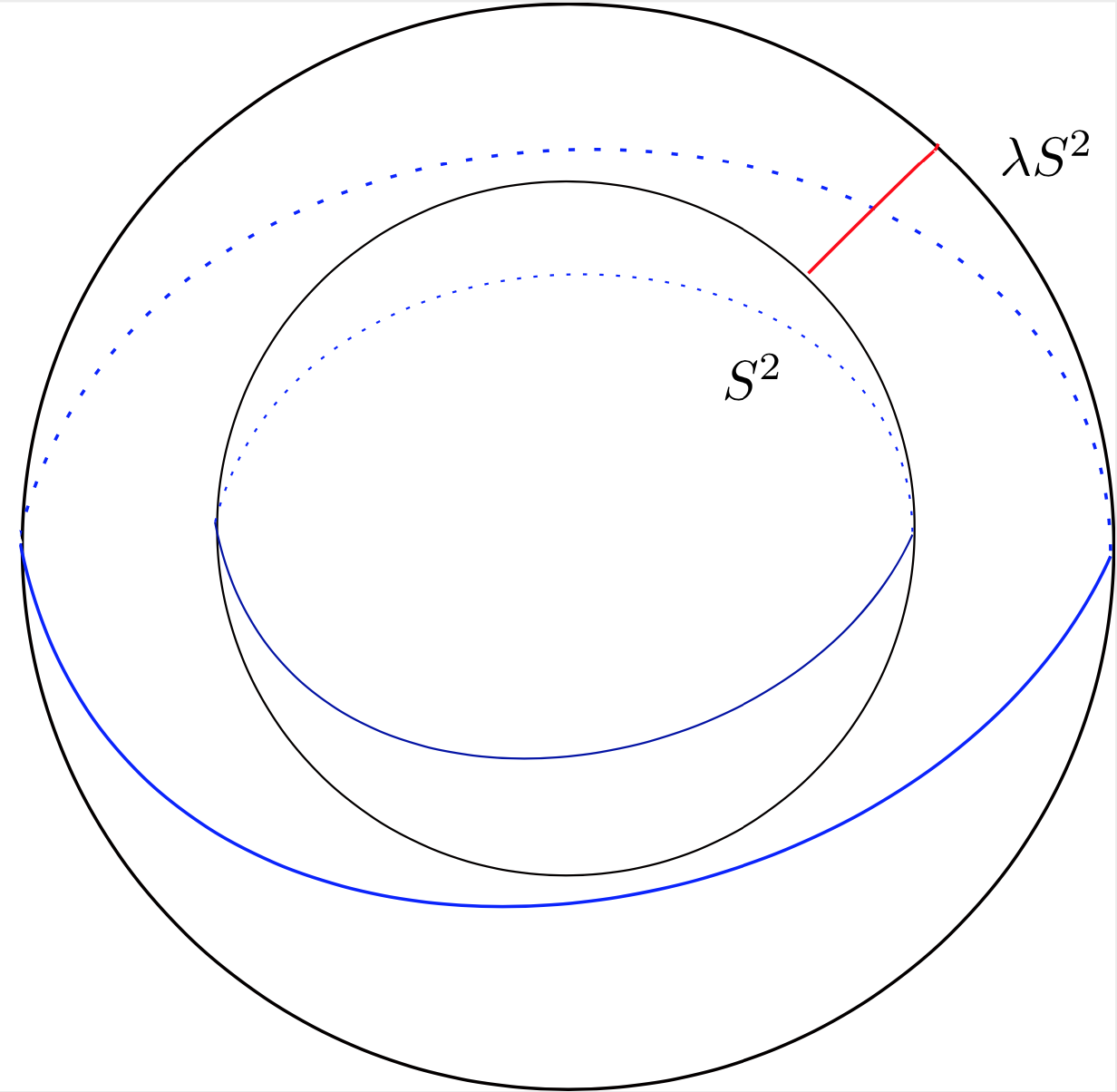}
\end{center}
\caption{An illustration of a three-dimensional hopf-manifold.  The solid space between two concentric spheres is drawn above with an equator in blue.  The action of the homothety identifies the inner sphere with the outer sphere via a dilation.  The curve is red is projected to a circle in the quotient.}
\label{fig:4}
\end{figure}
\end{example}
%
%
%
% CITATION 8 GOLDMAN
A standard result of in the theory of geometric structures states that a closed radiant manifold cannot have its fixed point as an element of the developing image \cite{bib8}.  Below is a modified version of the standard argument that is suited for the content of this paper.
%
%
%
%THEOREM 3
\begin{theorem}\label{thm3}
Let $M$ be a closed radiant manifold.  The developing image cannot meet the fixed point of the radiant structure.  
\end{theorem}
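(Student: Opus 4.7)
The plan is to argue by contradiction. Assume there is $\tilde p \in \widetilde M$ with $\text{dev}(\tilde p) = 0$, and use the assumption to construct a flow on $M$ whose dynamics force a topological contradiction. The radial vector field $X = -\sum_{i=1}^{n} x^{i}\,\partial_{x^{i}}$ on $\mathbb{R}^n$ is preserved by the full linear group $\text{GL}(n,\mathbb{R})$, hence by the holonomy of any radiant structure. Pulling $X$ back through the local diffeomorphism $\text{dev}$ produces a smooth $\Gamma$-invariant vector field $\widetilde V$ on $\widetilde M$ which descends to a vector field $V$ on $M$. Since $M$ is compact, the associated flow $\Theta_t$ on $M$ is complete, and its lift $\widetilde \Theta_t$ is $\text{dev}$-related to the contracting flow $R_t(x) = e^{-t}x$ on $\mathbb{R}^n$. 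The zero set $Z_M := \pi(\text{dev}^{-1}(0))$ is discrete in the compact manifold $M$ and hence finite, and nonempty by our hypothesis. At each $p \in Z_M$ the linearization of $V$ is conjugate to $-I$, making $p$ a hyperbolic attracting equilibrium whose basin of attraction is an open subset of $M$.

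The central step is to show that every forward orbit of $\Theta_t$ converges to a point of $Z_M$. Fix $\tilde q \in \widetilde M$ with $x := \text{dev}(\tilde q) \neq 0$, and choose $\epsilon > 0$ small enough that around each lift of the origin there is a neighborhood $U_{\tilde p}$ mapped diffeomorphically onto $B_{\epsilon}(0)$ by $\text{dev}$. For $t$ sufficiently large, $e^{-t}x \in B_{\epsilon}(0)$, so the orbit $\widetilde \Theta_t(\tilde q)$ lies in $\text{dev}^{-1}(B_{\epsilon}(0))$; by continuity the tail lies in a single connected component $C$ of this preimage. Since the orbit is the unique $\text{dev}$-lift of the path $e^{-t}x$, which converges to $0$ inside the simply connected ball $B_{\epsilon}(0)$, a covering-space argument forces $C$ to coincide with one of the basin charts $U_{\tilde p}$. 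Consequently $\widetilde \Theta_t(\tilde q) \to \tilde p$ in $\widetilde M$, and $\Theta_t(q) \to \pi(\tilde p) \in Z_M$ in $M$.

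Once forward convergence is established, the basins of the finitely many sinks of $V$ form an open, pairwise disjoint cover of the connected manifold $M$, so there is exactly one sink $p_\star$ and $\Theta_t$ realises a strong deformation retraction of $M$ onto $\{p_\star\}$. This contradicts the fact that a closed manifold of positive dimension is not contractible, completing the proof. The main obstacle is the orbit convergence step: one must rule out the possibility that a forward orbit becomes trapped in a zero-free connected component of $\text{dev}^{-1}(B_\epsilon(0))$ that only accumulates at lifts of $0$ along its boundary. Handling this requires combining the local injectivity of $\text{dev}$ near $\text{dev}^{-1}(0)$, the uniqueness of lifts of the contracting path $e^{-t}x$ through a local diffeomorphism, and the $\Gamma$-equivariance of the setup to pin down the structure of the relevant component.
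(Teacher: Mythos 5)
Your setup (the pulled-back radial field, completeness of its flow by compactness, finiteness of $Z_M$, hyperbolicity of the zeros) matches the paper's, and your closing step would be fine \emph{if} every forward orbit converged to a zero. The genuine gap is exactly the step you flag as the main obstacle, and the tools you name do not close it. The phrase ``covering-space argument'' has no purchase here: $\text{dev}$ is only a local diffeomorphism, so its restriction to a connected component $C$ of $\text{dev}^{-1}(B_{\epsilon}(0))$ is in general neither surjective onto $B_{\epsilon}(0)$ nor a covering of it, and neither uniqueness of lifts through a local diffeomorphism nor simple connectivity of the ball yields convergence of the lifted orbit. The lifted orbit exists for all time (completeness), but it may leave every compact subset of $\widetilde M$ while its development tends to $0$ --- this is precisely the behavior on a Hopf manifold, where forward radial orbits recur forever in the quotient even though their developments converge to the fixed point. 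The hypothesis $0 \in \text{dev}(\widetilde M)$ does not by itself exclude zero-free components of $\text{dev}^{-1}(B_{\epsilon}(0))$; ruling them out is essentially the content of the theorem, so as written your central claim is unproved (and the sketch is close to circular).

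The paper avoids needing convergence of every orbit. It fixes one neighborhood $U_{i}$ mapped by $\text{dev}$ diffeomorphically onto a ball centered at $0$, notes the forward invariance $R_{t}\,\text{dev}(U_{i}) \subseteq \text{dev}(U_{i})$ for $t \geq 0$, hence $\widetilde R_{t} U_{i} \subseteq U_{i}$, and uses this to show $\text{dev}$ is injective on the full two-sided flow saturation $\widetilde R_{\infty} U_{i}$: given equal developments of two points of the saturation, one compares them at time difference $t-s \geq 0$, lands back inside $U_{i}$, and uses injectivity there. The image of the saturation is the backward radial saturation of the ball, i.e.\ all of $\mathbb{R}^{n}$, and Lemma \ref{lm3} (an open subset on which an open map restricts to a diffeomorphism is closed, hence all of the connected $\widetilde M$) gives $\widetilde R_{\infty} U_{i} = \widetilde M$. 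Thus $\text{dev}$ is a global diffeomorphism, so $M \cong \mathbb{R}^{n}/H$ with $H$ a subgroup of $\text{GL}(n,\mathbb{R})$ acting freely; since every element of $H$ fixes $0$, $H$ is trivial, contradicting compactness. To repair your argument, replace the orbit-convergence claim with this saturation-plus-connectedness mechanism (or prove an equivalent statement); the dynamical picture you describe is then a corollary of the contradiction hypothesis, not an ingredient you can assume en route.
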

%
%
%
%%%%% PROOF OF THEOREM 3
\begin{proof}
Fix a developing pair $\text{dev} : \widetilde{M} \longrightarrow \mathbb{A}^{n}$ and $\text{hol} : \Gamma \longrightarrow \text{GL}(n,\mathbb{R})$.  Let $R = -y^{i}\partial/\partial y^{i}$ be the attractive radial vector field on $\mathbb{R}^{n}$.  Routine calculation shows that $R$ is invariant under general linear group, so it may be lifted by the developing map to a $\Gamma$-invariant vector field $\widetilde{R}$ on $\widetilde{M}$.  The vector field $\widetilde{R}$ descends to a vector field on $M$, which is complete by compactness, and thus the corresponding flow on $\widetilde{M}$ is also complete.  Denote the flow on $\widetilde{M}$ by $\widetilde{R}_{t}$ and the corresponding radial flow on $\mathbb{R}^{n}$ by $R_{t}$.  These flows are related by the commutative diagram below
%
%
%
% EQUATION 19
\begin{equation}\label{eq19}
	\begin{tikzcd}
		& \widetilde{M}  \arrow{r}{\widetilde{R}_{t}} \arrow{d}[swap]{\text{dev}}& \widetilde{M} \arrow{d}{\text{dev}}\\
		& \mathbb{R}^{n} \arrow{r}[swap]{R_{t}}& \mathbb{R}^{n}
	\end{tikzcd} \nonumber
\end{equation}
Now, assume that the origin is an element of the developing image.  Then $\text{dev}^{-1}\{0\} \subset \widetilde{M}$ is a discrete subset of stationary points of $\widetilde{R}$.  Choose a collection of pairwise disjoint open sets $U_{i}$ about each element $u_{i} \in\text{dev}^{-1}\{0\}$.  Since $\text{dev}$ is a local diffeomorphism, one may shrink each $U_{i}$ if necessary to assume the developing map restricted to each $U_{i}$ is a diffeomorphism onto an open ball centered at the origin.  For each $t \geq 0$, one has that $R_{t} \text{dev}(U_{i}) \subseteq \text{dev}(U_{i})$ and thus $\widetilde{R}_{t} U_{i} \subseteq U_{i}$.  \\
\\
For each $U_{i}$, let $\widetilde{R}_{\infty}U_{i}$ denote the forward and backward saturation of $U_{i}$ with respect to the radial flow on the universal cover.  Explicitly,
%
%
%
% EQUATION 20
\begin{equation}\label{eq20}
\widetilde{R}_{\infty}U_{i} = \bigcup_{t\in\mathbb{R}} \widetilde{R}_{t}U_{i}
\end{equation}
As $\widetilde{R}_{\infty}U_{i}$ is union of open subsets in $\widetilde{M}$, it is itself an open submanifold of $\widetilde{M}$.  Additionally, the developing map restricted to each $\widetilde{R}_{\infty}U_{i}$ is a diffeomorphism onto its image.  To prove this, it suffices to show that the developing map is injective when restricted to each $\widetilde{R}_{\infty}U_{i}$.  \\
\\
Let $u, v \in \widetilde{R}_{\infty}U_{i}$ so that $\text{dev}(u)= \text{dev}(v)$.  There exists times $t,s \in\mathbb{R}$ and points $u_{i}, v_{i} \in U_{i}$ so that $\text{dev} (\widetilde{R}_{t}u_{i}) = \text{dev} (\widetilde{R}_{s}v_{i})$.  Without loss of generality, let $t-s \geq 0$.  By equivariance of the radial actions, $R_{t-s}\text{dev}(u_{i}) = \text{dev}(v_{i})$.  Since $t-s \geq 0$, one has that $\widetilde{R}_{t-s} U_{i} \subseteq U_{i}$ so $R_{t-s} \text{dev}(U_{i}) \subseteq \text{dev}(U_{i})$.  Because $\text{dev}(\widetilde{R}_{t-s} u_{i}) = \text{dev}(v_{i})$ and $\widetilde{R}_{t-s} U_{i} \subseteq U_{i}$ on which the developing map is a diffeomorphism, one has $\widetilde{R}_{t-s} u_{i} = v_{i}$ so $u = v$ as claimed.  \\
\\
The above paragraphs show that the developing map when restricted to any $\widetilde{R}_{\infty} U_{i}$ is a diffeomorphism onto its image, which is the radial saturation of an open ball about the origin, and consequently a diffeomorphism onto all of $\mathbb{R}^{n}$.  Lemma \ref{lm3} shows that $\widetilde{R}_{\infty} U_{i}$ is closed, and consequently by connectedness of $\widetilde{M}$, is equal to the universal cover.  Thus the developing map is a diffeomorphism onto $\mathbb{R}^{n}$ and defines a complete radial structure.\\
\\
As per consequence there exists a subgroup $H \leq \text{GL}(n,\mathbb{R})$ acting both properly and freely on $\mathbb{R}^{n}$ so that $\mathbb{R}^{n}/H$ is diffeomorphic to $M$.  Since the origin is a fixed point of each element of $\text{GL}(n,\mathbb{R})$ and $H$ acts freely, $H$ must be trivial.  This contradicts the fact that $M$ is compact, and thus the origin is not an element of the developing image.
\end{proof}
As mentioned previously, the above proof yields a corollary that assists the proof of a later theorem.  It is stated here for reference later.  
%
%
%
%COROLLARY 1
\begin{corollary}\label{cor1}
Let $N$ be a connected smooth manifold and $F: N \longrightarrow \mathbb{R}^{n}$ be a local diffeomorphism where $0 \in F(N)$.  If the radial action on $\mathbb{R}^{n}$ can be lifted to a complete action on $N$, then $F$ is a diffeormophism onto $\mathbb{R}^{n}$.  
\end{corollary}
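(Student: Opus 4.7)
The plan is to essentially replay the proof of Theorem \ref{thm3} in this abstract setting, since the hypothesis of the corollary isolates precisely the ingredients used there. Denote by $R_t$ the radial flow on $\mathbb{R}^n$ and by $\widetilde{R}_t$ its complete lift to $N$, which exists by hypothesis and satisfies $F\circ \widetilde{R}_t = R_t \circ F$. Pick a point $u \in F^{-1}\{0\}$ using $0 \in F(N)$, and choose an open neighborhood $u \in U \subset N$ so that $F|_U$ is a diffeomorphism onto an open ball $B$ centered at the origin. For every $t \geq 0$ the contracting behavior of the radial flow gives $R_t(B) \subseteq B$, and so by equivariance $\widetilde{R}_t U \subseteq U$.

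Next, I would define the total saturation $\widetilde{R}_\infty U = \bigcup_{t \in \mathbb{R}} \widetilde{R}_t U$, which is open in $N$ as a union of open sets. The key step is to show that $F$ restricted to $\widetilde{R}_\infty U$ is injective. This follows by the identical argument as in Theorem \ref{thm3}: if $F(\widetilde{R}_t u_1) = F(\widetilde{R}_s u_2)$ with $u_1, u_2 \in U$, then after assuming $t - s \geq 0$ one has $R_{t-s} F(u_1) = F(u_2)$, hence $F(\widetilde{R}_{t-s} u_1) = F(u_2)$ with both sides lying in $U$, so injectivity of $F|_U$ forces $\widetilde{R}_{t-s} u_1 = u_2$ and therefore the original points coincide.

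Having established that $F|_{\widetilde{R}_\infty U}$ is an injective local diffeomorphism, its image is exactly the radial saturation of $B$, which is all of $\mathbb{R}^n$, so $F|_{\widetilde{R}_\infty U}$ is a diffeomorphism onto $\mathbb{R}^n$. To conclude that $\widetilde{R}_\infty U$ is all of $N$, I would invoke Lemma \ref{lm3} (as referenced in the proof of Theorem \ref{thm3}) to argue that $\widetilde{R}_\infty U$ is also closed in $N$. Since $N$ is connected and $\widetilde{R}_\infty U$ is a nonempty clopen subset, it must coincide with $N$, and hence $F$ itself is a diffeomorphism onto $\mathbb{R}^n$.

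The only genuine obstacle is the closedness step, which is why Lemma \ref{lm3} is cited; the injectivity argument and the construction of the saturation are direct transcriptions of the radiant case. Notably, the compactness hypothesis used in Theorem \ref{thm3} is replaced here by the completeness of the lifted radial action, which is precisely what is needed to keep the saturation procedure well-defined and to make the closedness argument go through.
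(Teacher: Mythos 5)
Your proposal is correct and is essentially the paper's own argument: the corollary is stated as an immediate consequence of the proof of Theorem \ref{thm3}, and you have simply replayed that proof (forward-invariant neighborhood of a preimage of the origin, radial saturation, injectivity, surjectivity onto $\mathbb{R}^{n}$, and Lemma \ref{lm3} plus connectedness) with compactness replaced by the hypothesized completeness of the lifted radial action, exactly as intended.
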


%%%%%%%%%%%%%%%%%%%%%%%%%%%%%%%%%%%%%%%%%%%%%%%%%%%%
%%%						     											%%%
%%% HOLONOMY ACTING BY TRANSLATION ON THE INVARIANT LINE	   		            %%%
%%%						       											%%%%%
%%%%%%%%%%%%%%%%%%%%%%%%%%%%%%%%%%%%%%%%%%%%%%%%%%%%%

\section{Holonomy Acting by Translations on The Invariant Line}
\label{sec:5}
In this section of this paper, a mild generalization of Theorem \ref{thm1} is provided.  The goal of this section is to prove the following theorem.
\begin{theorem}\label{thm4}
Let $M$ be a closed $(n+1)$-dimensional affine manifold whose holonomy leaves invariant an affine line where $n \geq 1$.  If the holonomy acts by pure translations on the invariant line, then the developing image cannot meet the invariant line.  
\end{theorem}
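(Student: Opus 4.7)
The plan is to argue by contradiction: assume the developing image meets the invariant line and show this forces $M$ to be complete, contradicting Theorem \ref{thm1}. Because the holonomy acts on the invariant line by pure translations, each holonomy element takes the form in Equation \ref{eq6} with $r=1$, so the linear part fixes $\partial/\partial x$ and the holonomy preserves a parallel vector field. Theorem \ref{thm2} then provides an identification $\widetilde{M}\cong \mathbb{R}\times N$ together with an equivariant local diffeomorphism $\overline{\text{dev}}:N\longrightarrow \mathbb{R}^{n}$, and a short computation using the parallel-flow equivariance in Equation \ref{eq17} shows $\text{dev}(\widetilde{M}) = \mathbb{R}\times \overline{\text{dev}}(N)$. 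Consequently, the developing image meets the invariant line precisely when $0\in \overline{\text{dev}}(N)$. Moreover, since the holonomy has no translational part in the $\mathbb{R}^{n}$ factor, the induced holonomy on $\mathbb{R}^{n}$ lies inside $\text{GL}(n,\mathbb{R})$ and fixes the origin.

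My strategy is to apply Corollary \ref{cor1} to $\overline{\text{dev}}$. The radial vector field $R = -y^{i}\partial/\partial y^{i}$ on $\mathbb{R}^{n}$ is $\text{GL}(n,\mathbb{R})$-invariant and thus lifts through the local diffeomorphism $\overline{\text{dev}}$ to a well-defined $\Gamma$-invariant vector field $\widetilde{R}$ on $N$. Once $\widetilde{R}$ is shown to be complete, Corollary \ref{cor1} forces $\overline{\text{dev}}$ to be a diffeomorphism onto $\mathbb{R}^{n}$; combined with the fact from Theorem \ref{thm2} that $\text{dev}$ carries the parallel flow to translation, this upgrades $\text{dev}$ itself to a diffeomorphism, making $M$ complete and contradicting Theorem \ref{thm1}.

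The hard step is the completeness of $\widetilde{R}$. The naive extension of $\widetilde{R}$ to $\widetilde{M}$ by zero on the $\mathbb{R}$ factor fails to be $\Gamma$-invariant, because the off-diagonal entries $w$ in Equation \ref{eq6} shear the parallel direction, so one cannot simply descend to $M$ and invoke compactness. Instead I would argue directly. Suppose an integral curve $n(t)$ of $\widetilde{R}$ has maximal forward existence time $T_{\max}<\infty$. Lift the curve arbitrarily to $\widetilde{M}$ and project to the compact manifold $M$; by compactness there is a subsequence $t_{k}\to T_{\max}$ and deck transformations $\gamma_{k}\in \Gamma$ such that the translates of $n(t_{k})$ under the induced $\Gamma$-action on $N$ converge to some $q\in N$. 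Standard local existence for smooth ODEs produces a neighborhood $V$ of $q$ and $\epsilon>0$ on which the flow of $\widetilde{R}$ is defined for time $(-\epsilon,\epsilon)$. By $\Gamma$-invariance the flow of $\widetilde{R}$ commutes with the induced action of each $\gamma_{k}$, so pulling the flow through the translate of $n(t_{k})$ back through $\gamma_{k}^{-1}$ yields the flow from $n(t_{k})$ itself on the same interval. For $k$ sufficiently large this extends $n(\cdot)$ past $T_{\max}$, contradicting its maximality.
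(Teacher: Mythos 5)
Your proposal follows the same skeleton as the paper's proof: reduce to the group of Equation \ref{eq21}, invoke Theorem \ref{thm2} to get $\widetilde{M}\cong\mathbb{R}\times N$ and the local diffeomorphism $\overline{\text{dev}}:N\longrightarrow\mathbb{R}^{n}$, lift the radial field $R$ to a $\Gamma$-invariant field $\widetilde{R}$ on $N$, prove $\widetilde{R}$ is complete, apply Corollary \ref{cor1} and the remark after Theorem \ref{thm2} to conclude completeness of the structure, and contradict Theorem \ref{thm1}. The one place you genuinely diverge is the completeness of $\widetilde{R}$, and your route there is both different and, in my view, more careful. The paper extends $\widetilde{R}$ to $\mathbb{R}\times N$ by declaring it zero in the $T\mathbb{R}$ summand, asserts this extension is $\Gamma$-invariant ``by construction,'' descends it to the compact manifold $M$, and deduces completeness there. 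Your objection to that step is legitimate: a holonomy element $(x,y)\mapsto(x+wy+d,Ay)$ pushes the vector $(0,-y)$ forward to $(-wy,-Ay)$, so the cylindrical field on $\mathbb{R}\times\mathbb{R}^{n}$ is not holonomy-invariant when $w\neq 0$, and correspondingly the $\Gamma$-action on $\mathbb{R}\times N$ is only a product action up to a shear $[\gamma](t,n)=(t+\tau_{\gamma}(n),\gamma\cdot n)$ whose differential along $\widetilde{R}$ need not vanish; the invariance of the product extension therefore requires an argument the paper does not supply. Your replacement --- take $t_{k}\to T_{\max}$, use compactness of $M$ to extract deck transformations $\gamma_{k}$ with $\gamma_{k}\cdot(0,n(t_{k}))$ convergent in $\mathbb{R}\times N$, hence $\gamma_{k}\cdot n(t_{k})\to q$ in $N$, then use the uniform local existence time near $q$ together with $\Gamma$-equivariance of the local flow of $\widetilde{R}$ to push past $T_{\max}$ --- is correct and self-contained; it is the standard cocompactness argument for completeness of an invariant vector field, applied on $N$ even though the $\Gamma$-action there is neither free nor proper (which is all that is needed, since only equivariance of integral curves is used). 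What your approach buys is independence from any choice of $\Gamma$-invariant horizontal lift of $\widetilde{R}$ to $\widetilde{M}$; what the paper's approach would buy, if the invariance claim were justified, is a genuinely $\Gamma$-invariant ``cylindrical flow'' on $\widetilde{M}$ itself, which it uses for geometric intuition (Figure \ref{fig:5}) but does not actually need beyond the completeness of $\widetilde{R}$ on $N$.
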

Before beginning the proof it is worth explaining the technical ideas.  As $M$ admits an invariant affline line and the holonomy acts by pure translations on it, one may assume that the holonomy lies in the subgroup of affine automorphisms of the form defined by 
%
%
%
%%% EQUATION 21
\begin{equation}\label{eq21}
G = 
\left\{
	\left(
		\begin{array}{cc}
		1 & w \\
		0 & A
		\end{array}
	\right)
	\left(
		\begin{array}{c}
		d \\
		0
		\end{array}
	\right) 
	\Bigg|\,
	 d\in\mathbb{R}, w^{T} \in \mathbb{R}^{n}, A \in \text{GL}(n,\mathbb{R})
\right\}
\end{equation}
As per consequence, there is a parallel flow on the universal cover as in Theorem \ref{thm2}.  One can then form the commutative diagram as in Equation \ref{eq18} to obtain the local diffeomorphism $\overline{\text{dev}} : N \longrightarrow \mathbb{R}^{n}$.  Because the induced holonomy in Equation \ref{eq18} acts linearly by the matrices $A$ in Equation \ref{eq21}, there is a complete flow on $N$ that lifts the radial flow on $\mathbb{R}^{n}$.  If the developing image meets the invariant line then $0 \in \overline{\text{dev}}(N)$, and by Corollary \ref{cor1}, $\overline{\text{dev}}$ will define a global diffeomorphism from $N$ onto $\mathbb{R}^{n}$.  Saturating $N$ by the parallel flow yields that the developing map is diffeomorphism onto all of $\mathbb{R}\times\mathbb{R}^{n}$ thus defining a complete structure whose affine holonomy leaves invariant an affine line thus contradicting Theorem 1.  The details of this argument are provided below in the proof.
%
%
%
% PROOF OF THEOREM 4
\begin{proof}
As mentioned in the preceding paragraph, assume the holonomy lies inside the group defined by Equation \ref{eq21}.  By Theorem \ref{thm2}, there exists a complete parallel flow on the universal cover of $M$.  Form the commutative diagram defined by Equation \ref{eq18} with the induced developing map $\overline{\text{dev}} : N \longrightarrow \mathbb{R}^{n}$ and the corresponding induced actions of $\Gamma$ and the holonomy on $N$ and $\mathbb{R}^{n}$ respectively.  \\
\\
Since the holonomy of $M$ lies inside of $G$, the induced holonomy action in Equation \ref{eq18} acts linearly on $\mathbb{R}^{n}$, and thus preserves the attractive radial vector field $R = -y^{i}\partial/\partial y^{i}$ as in Section \ref{sec:4}.  This vector field lifts via $\overline{\text{dev}} : N \longrightarrow \mathbb{R}^{n}$ to a $\Gamma$-invariant vector field $\widetilde{R}$ on $N$.  Identifying the tangent bundle $T(\mathbb{R}\times N)$ with $T\mathbb{R}\oplus TN$ yields a natural lift of $\widetilde{R}$ to a vector field on $\mathbb{R}\times N$, whereby construction, this vector field is also $\Gamma$-invariant.  The $\Gamma$-invariant vector field on $\mathbb{R}\times N$ descends to a vector field on $M$ which is complete by compactness.  The flow on $M$ lifts to a complete flow on $\mathbb{R}\times N$ which leaves each leaf $x\times N$ of $\mathbb{R}\times N$ invariant.  This flow may be thought of as a cylindrical flow which is radial on each leaf.  Above is a figure that illustrates this construction.\\
\\
%
%
%
% FIGURE 5
\begin{figure}
\begin{center}
\includegraphics[scale=0.3]{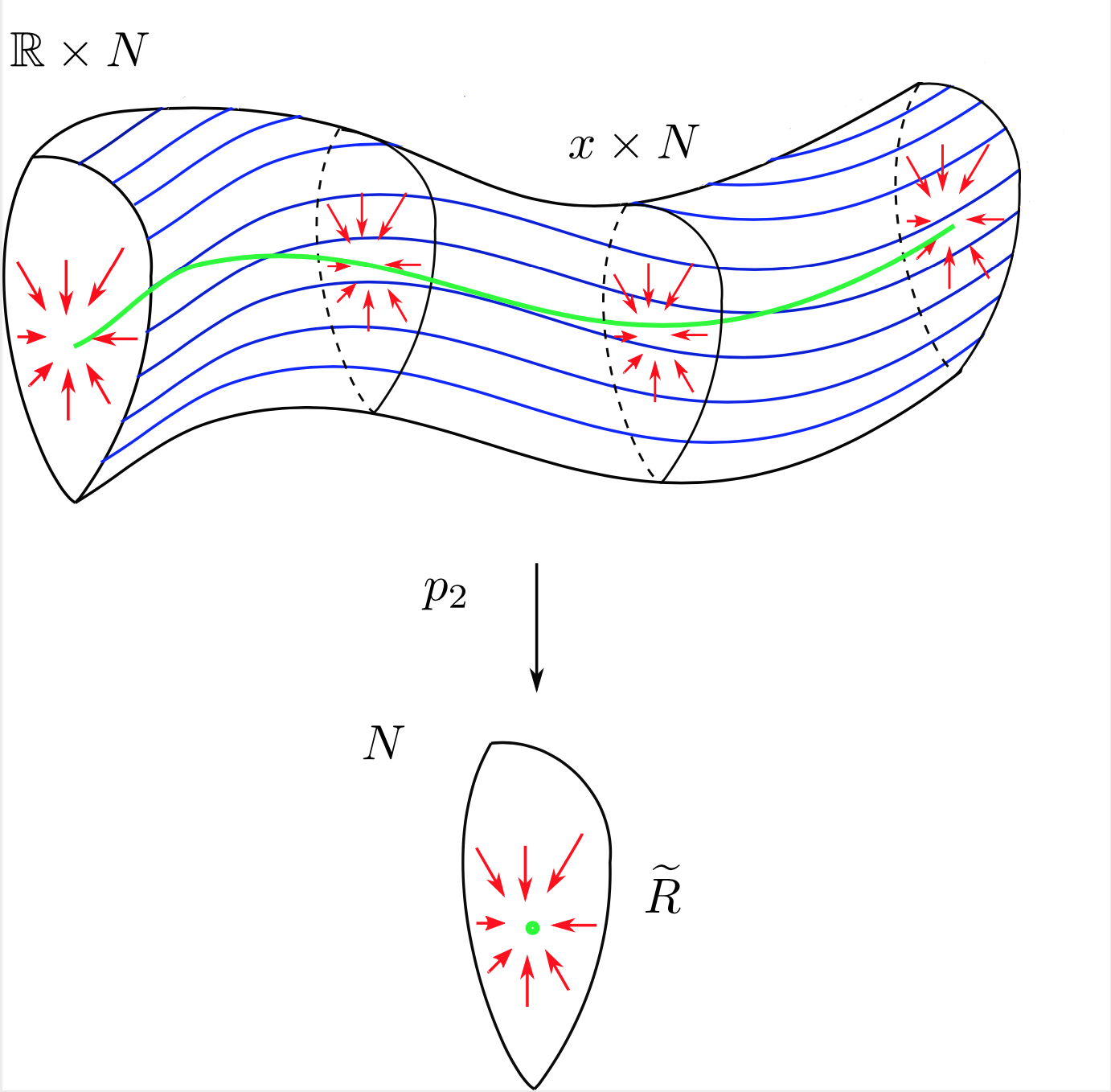}
\end{center}
\caption{An illustration of the cylindrical vector field on $\mathbb{R}\times N$ whose vectors are in red.  Several leaves are drawn transverse to the blue parallel flow lines.  The red inward pointing arrows represent the radial vector field whose flow preserves each leaf $x\times N$.  The radial vector field on $\mathbb{R}\times N$ projects to the vector field $\widetilde{R}$ on $N$ via factor projection $p_{2}$.  This vector field on $N$ is complete, as the cylindrical vector field on $\mathbb{R}\times N$ is complete.  The green parallel flow line in $\mathbb{R}\times N$ is left invariant by the radial flow and represents the parallel flow of a point $n \in N$ so that $\overline{\text{dev}}(n) = 0$.  }
\label{fig:5}
\end{figure}
Since the flow of the lift of $\widetilde{R}$ to $\mathbb{R}\times N$ is complete, by construction the flow of $\widetilde{R}$ is itself complete and thus serves as a lift of the radial flow $R$ on $\mathbb{R}^{n}$ through $\overline{\text{dev}}$.  If the developing image $\text{dev}(\mathbb{R}\times N)$ meets the invariant line, then $0 \in \overline{\text{dev}}(N)$.  Corollary \ref{cor1} implies that $\overline{\text{dev}} : N \longrightarrow \mathbb{R}^{n}$ is a diffeomorphism, and the remark after Theorem \ref{thm2} yields that $\text{dev} : \mathbb{R}\times N \longrightarrow \mathbb{R}\times\mathbb{R}^{n}$ is a diffeomorphism.  Hence $M$ admits a complete affine structure whose holonomy lies inside the group $G$ defined in Equation \ref{eq21}, contradicting Theorem \ref{thm1}.  Thus, the developing image cannot meet the invariant line.   
\end{proof}
As an immediate consequence to the proof of Theorem \ref{thm4} one obtains the following corollary.
%
%
%
%COROLLARY 2
\begin{corollary}\label{cor2}
Let $M$ be a closed $(n+1)$-dimensional affine manifold whose holonomy leaves invariant an affine line where $n \geq 1$.  If the holonomy acts by pure translations and reflections on the invariant line, then the developing image cannot meet the invariant line.  
\end{corollary}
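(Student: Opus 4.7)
The plan is to reduce this corollary to Theorem \ref{thm4} by passing to a double cover. Because the holonomy acts on the invariant line by translations and reflections, the induced action lies in the group $\mathbb{R} \rtimes \mathbb{Z}/2$ of isometries of the line. The subgroup of pure translations has index at most two in this group. Pulling back under the holonomy homomorphism yields a subgroup $\Gamma' \leq \Gamma$ of index at most two consisting of those deck transformations whose holonomy restricts to a translation on the invariant line.

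First I would handle the trivial case: if $\Gamma' = \Gamma$, then the holonomy already acts by pure translations on the invariant line and Theorem \ref{thm4} applies directly. Otherwise $\Gamma'$ has index two in $\Gamma$, and since $\Gamma'$ acts freely and properly discontinuously on $\widetilde{M}$, the quotient $M' = \widetilde{M}/\Gamma'$ is a closed $(n+1)$-dimensional manifold which is a two-fold cover of $M$. The affine structure on $M$ pulls back to an affine structure on $M'$ whose developing map and holonomy are exactly $\text{dev}$ and $\text{hol}|_{\Gamma'}$ respectively, since $M'$ shares the same universal cover $\widetilde{M}$ as $M$.

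By construction, the holonomy of $M'$ acts purely by translations on the invariant line, so Theorem \ref{thm4} guarantees that the developing image of $M'$ does not meet the invariant line. But this developing image equals $\text{dev}(\widetilde{M})$, which is also the developing image of $M$. Hence the developing image of $M$ itself cannot meet the invariant line.

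The only point that requires genuine care is checking that $M'$ is indeed a closed affine manifold to which Theorem \ref{thm4} applies: one must verify that $\Gamma'$ is a normal subgroup of index two (which follows because it is the kernel of the composition of $\text{hol}$ with the sign homomorphism $\mathbb{R} \rtimes \mathbb{Z}/2 \to \mathbb{Z}/2$), that the quotient $\widetilde{M}/\Gamma'$ inherits the affine structure coherently from $\widetilde{M}$, and that $M'$ is compact (immediate from being a finite cover of the closed manifold $M$). These are all routine verifications, so the corollary essentially reduces to the observation that replacing $M$ by a finite cover does not change the developing image while refining the holonomy to lie in the translation subgroup.
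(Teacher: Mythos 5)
Your proposal is correct and follows essentially the same route as the paper: the paper likewise observes that the pure-translation group of Equation \ref{eq21} has index two in the translation-and-reflection group and passes to the corresponding double cover, then applies Theorem \ref{thm4}, using that the double cover shares the same universal cover and hence the same developing image. Your write-up simply makes explicit the routine verifications (normality of the index-two subgroup, compactness of the cover, unchanged developing pair) that the paper leaves implicit.
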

\begin{proof}
This is an immediate consequence of the fact that the group defined by Equation \ref{eq21} is an index two subgroup of
%
%
%
% EQUATION 22
\begin{equation}\label{eq22}
\left\{
	\left(
		\begin{array}{cc}
		\pm1 & w \\
		0 & A
		\end{array}
	\right)
	\left(
		\begin{array}{c}
		d \\
		0
		\end{array}
	\right) 
	\Bigg|\,
	 d\in\mathbb{R}, w^{T} \in \mathbb{R}^{n}, A \in \text{GL}(n,\mathbb{R}) \nonumber
\right\}
\end{equation}
\end{proof}
Passing to the double cover $M$ and applying Theorem \ref{thm4} yields the desired result.

%%%%%%%%%%%%%%%%%%%%%%%%%%%%%%
%%%						     			 %%%
%%% CONCLUDING REMARKS	   		            %%%
%%%						       				%%%
%%%%%%%%%%%%%%%%%%%%%%%%%%%%%%%

\section{Concluding Remarks}
\label{sec:6}
A natural follow up to Theorem \ref{thm4} would be to analyze the situation where the holonomy lies in the extension of the group $G$ as defined in Equation \ref{eq6}.  The author suspects that such manifolds are radiant.  Loosely the idea at hand is the following.  If there is indeed an element of the holonomy of the form
%
%
%
%%% EQUATION 23
\begin{equation}\label{eq23}
\text{hol}[\gamma] = 	\left(
		\begin{array}{cc}
		r & w \\
		0 & A
		\end{array}
	\right)
	\left(
		\begin{array}{c}
		d \\
		0
		\end{array} \nonumber
	\right) 
\end{equation}
where $r \neq 1$, then without loss of generality, one may conjugate to assume this element of the holonomy acts on the invariant line by scaling, and consequently admits a fixed point, which can be taken as the origin.  It seems likely that the developing image cannot meet this point, much like in the example of a hopf circle.  If that were so, then this would impose certain restrictions about having a non-trivial translational part in the holonomy.  The difficulty in showing this point does not meet the developing image is that $[\gamma] \in \Gamma$ need not stabilize the components of the inverse image of the invariant line under the developing map.  This difficulty is lost in the case where the fundamental group is abelian, but seems like an excessive and unnecessary hypothesis.  \\
\\
The proof of Theorem \ref{thm4} relied largely on the existence of a parallel flow and a `cylindrical' flow on the universal cover of $M$.  These flows can coexist on certain manifolds such as the product of a euclidean circle and a hopf torus as in Example \ref{ex4}.  It is of great interest to the author as to whether or not parallel and radial flows can coexist on compact affine manifolds.  The dynamics of such flows would certainly lead to very interesting examples.

%%%%%%%%%%%%%%%%%%%%%%
%%%						      %%%
%%%  LEMMAS	   		            %%%
%%%						       %%%
%%%%%%%%%%%%%%%%%%%%%%%

\section{Lemmas}
\label{sec:7}
Here is a collection of some lemmas used throughout the paper.  In this context, all topological spaces are assumed to be smooth manifolds, as is the concern of this paper.  That said, some of these propositions hold in more general contexts such as metric spaces.
\begin{lemma}\label{lm1}
Let $\phi : G \longrightarrow H$ be a  homomorphism of lie groups and let $G$ and $H$ act on the smooth manifolds $X$ and $Y$ respectively.  Assume there exists a diffeomorphism $F : X \longrightarrow Y$ equivariant with respect to $\phi$ in the sense that the below diagram commutes for all $g \in G$.
%
%
%
% EQUATION 24
\begin{equation}\label{eq24}
	\begin{tikzcd}
		&X \arrow{r}{g}  \arrow{d}[swap]{F} &X \arrow{d}{F}\\
		 &Y \arrow{r}[swap]{\phi(g)} & Y
	\end{tikzcd} 
\end{equation}
If $G$ acts properly and freely on $X$ then so too does $\phi(G)$ on $Y$.  In this case, one may form the quotients $X/G$ and $Y/\phi(G)$, which are in turn diffeomorphic.  
\end{lemma}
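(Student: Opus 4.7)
The plan is to prove three claims in sequence, using the diffeomorphism $F$ and the equivariance relation to transport each structural feature of the $G$-action on $X$ over to the $\phi(G)$-action on $Y$. Specifically, I would first verify that $\phi(G)$ acts freely on $Y$, then that it acts properly (with respect to the subspace topology inherited from $H$), and finally that the map $\bar{F}: X/G \to Y/\phi(G)$ induced by $[x] \mapsto [F(x)]$ is a diffeomorphism.

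Freeness is almost immediate: supposing $\phi(g) \cdot y = y$, setting $x := F^{-1}(y)$ and applying $F^{-1}$ to the equivariance relation $F(g \cdot x) = \phi(g) \cdot F(x) = y = F(x)$ forces $g \cdot x = x$ by injectivity of $F$, whence freeness of $G$ gives $g = e_G$ and hence $\phi(g) = e_H$. For properness I would use the standard characterization that the action is proper if and only if for every pair of compact sets $K_1, K_2$ the set $\{g : g \cdot K_1 \cap K_2 \neq \emptyset\}$ is compact. Given compact $L_1, L_2 \subseteq Y$, their preimages $K_i := F^{-1}(L_i)$ are compact in $X$, and the equivariance diagram~\ref{eq24} together with bijectivity of $F$ shows that
\[
\{h \in \phi(G) : h \cdot L_1 \cap L_2 \neq \emptyset\} \subseteq \phi\bigl(\{g \in G : g \cdot K_1 \cap K_2 \neq \emptyset\}\bigr).
\]
The right-hand side is compact as the continuous image of a compact set, and a routine sequential argument using compactness of $L_1$ and $L_2$ in the Hausdorff space $Y$ shows the left-hand side is closed in $H$, hence compact as a subset of $\phi(G)$ with its subspace topology.

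For the final claim, $\bar{F}$ is well-defined by equivariance (it sends $G$-orbits to $\phi(G)$-orbits), injective by combining injectivity of $F$ with the freeness argument above, and surjective because $F$ is. Smoothness descends from the universal property of the smooth submersion $X \to X/G$, and $\bar{F}$ is a local diffeomorphism since $F$ is a diffeomorphism and both quotient maps restrict to diffeomorphisms on sufficiently small slices transverse to the orbits. Altogether $\bar{F}$ is a smooth bijective local diffeomorphism, hence a diffeomorphism. The one delicate point is the choice of topology on $\phi(G)$: the subspace topology from $H$ suffices because compactness of a subset of $H$ that happens to lie in $\phi(G)$ is the same as its compactness relative to $\phi(G)$, so properness verified in $H$ transfers directly to $\phi(G)$. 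This is the only place where genuine care is required, and it is more a bookkeeping matter than a substantive obstacle.
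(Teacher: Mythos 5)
Your proof is correct and follows essentially the same route as the paper: freeness and properness are transported across the equivariant diffeomorphism $F$, and $F$ then descends to a smooth bijective local diffeomorphism of the quotients. The only difference is cosmetic---you verify properness with the compact-pair criterion while the paper uses the sequential criterion quoted from Lee, and your remark that the left-hand set is ``closed in $H$'' is more precisely that it equals the intersection of the closed set $\{h \in H : hL_{1}\cap L_{2}\neq\emptyset\}$ with the compact set $\phi(\{g \in G : gK_{1}\cap K_{2}\neq\emptyset\})$, which your own containment already supplies.
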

\begin{proof}
Begin by assuming that $G$ acts freely on $X$.  Let $\phi(g)y = y$ for some $g \in G$ and $y \in Y$.  Since $F$ is a diffeomorphism, there's an $x \in X$ so that $F(x) = y$ and so $\phi(g)F(x) = F(x)$ which by equivariance is equivalent to $F(gx) = F(x)$.  Since $G$ is a diffeomorphism, this necessitates that $gx = x$, and thus by freeness of $G$ on $X$, $g = 1$, so $\phi(g) = 1$.  A similar argument shows the homomorphism $\phi$ is injective, so $G$ and $\phi(G)$ are diffeomorphic as manifolds. \\
\\
%
%
%
%%%% CITATION 7
Let $G$ act properly on $X$.  Pick sequences $\phi(g_{i}) \in \phi(G)$ and $y_{i} \in Y$ so that $\phi(g_{i})y_{i}$ converges to a point $q \in Y$ and $y_{i}$ converges to $y \in Y$.  To show properness it suffices to show a subsequence of $\phi(g_{i})$ converges, as is stated in John M. Lee's Smooth Manifolds \cite{bib7}.  Since $F$ is a diffeomorphism, there's a unique sequence of $x_{i} \in X$ converging to an $x \in X$ so that $F(x_{i}) = y_{i}$ and $F(x) = y$.  Equivariance yields that $\phi(g_{i})y_{i} = \phi(g_{i})F(x_{i}) = F(g_{i}x_{i})$ which converges to $q \in Y$, and thus $g_{i}x_{i}$ converges to some $p \in X$.  Since $g_{i}x_{i}$ converges to $p \in X$ and $x_{i}$ converges to $x \in X$, properness of $G$ on $X$ yields a convergent subsequence of $g_{i}$ to $g \in G$.  Continuity of the lie group homomorphism $\phi : G \longrightarrow H$ provides a convergent subsequence $\phi(g_{i})$ converging to $\phi(g) \in \phi(G)$, and thus the action of $\phi(G)$ on $Y$ is proper.  \\
\\
As both actions of $G$ on $X$ and $\phi(G)$ on $Y$ are free and proper, one may form their smooth quotient manifolds $X/G$ and $Y/\phi(G)$.  Denote their corresponding projections by $p : X \longrightarrow X/G$ and $q : Y \longrightarrow Y/\phi(G)$.  One may then form the commutative square below
%
%
%
% EQUATION 25
\begin{equation}\label{eq25}
	\begin{tikzcd}
		&X \arrow{r}{F} \arrow{d}[swap]{p}&Y\arrow{d}{q}\\
		&X/G \arrow{r}[swap]{\overline{F}} & Y/\phi(G)
	\end{tikzcd}
\end{equation}
The map $\overline{F}$ is well defined as Equation \ref{eq24} ensures $F$ maps orbits to orbits.  As $q\circ F$ is surjective, so too is $\overline{F}$.  It is injective because if $\overline{F}(Gu) = \overline{F}(Gv)$ for some $Gu, Gv \in X/G$, then there exists $u,v \in X$ and a $g \in G$ so that $\phi(g)F(u) = F(v)$.  Equivariance implies $F(gu) = F(v)$ and since $F$ is a diffeomorphism, $u$ and $v$ are in the same orbit, thus $Gu = Gv$, so $\overline{F}$ is a bijection.  \\
\\
Since $X$ and $Y$ are the same dimension, as are $G$ and $\phi(G)$, the map $\overline{F}$ is a smooth bijective local diffeomorphism, and thus a diffeomorphism.
\end{proof}
Lemma \ref{lm1} provides the following result frequently used in the study of geometric structures.  
%
%
%
% COROLLARY 3
\begin{corollary}\label{cor3}
Let $M$ be a complete affine $n$-dimensional manifold with fundamental group $\Gamma = \pi_{1}(M,p)$.  Fix a developing pair $\text{dev} :\widetilde{M} \longrightarrow \mathbb{A}^{n}$ and $\text{hol} : \Gamma \longrightarrow \text{Aff}(n,\mathbb{R})$.  Then $M$ is diffeomorphic to $\mathbb{A}^{n}/H$ where $H$ is the image of the holonomy homomorphism.  
\end{corollary}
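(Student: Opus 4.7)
The plan is to deduce this directly from Lemma \ref{lm1} by identifying the right equivariant diffeomorphism. The completeness hypothesis gives the developing map $\text{dev}: \widetilde{M} \longrightarrow \mathbb{A}^n$ as a covering onto the simply connected space $\mathbb{A}^n$; as remarked immediately after Definition \ref{def2}, this forces $\text{dev}$ to be a diffeomorphism. So I have in hand a diffeomorphism between $\widetilde{M}$ and $\mathbb{A}^n$ which, by the equivariance diagram (\ref{eq5}), intertwines the deck transformation action of $\Gamma$ on $\widetilde{M}$ with the holonomy action of $\text{hol}(\Gamma)$ on $\mathbb{A}^n$ via the homomorphism $\phi = \text{hol}$.

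Next, I would invoke the standard fact that the deck transformation action of $\Gamma = \pi_1(M,p)$ on the universal cover $\widetilde{M}$ is smooth, free, and proper (this is the reason one can recover $M$ as $\widetilde{M}/\Gamma$ as a smooth manifold in the first place). With this in place, all the hypotheses of Lemma \ref{lm1} are satisfied with $G = \Gamma$, $H_{\text{target}} = \text{Aff}(n,\mathbb{R})$, $X = \widetilde{M}$, $Y = \mathbb{A}^n$, and $F = \text{dev}$.

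Applying Lemma \ref{lm1} then yields two conclusions simultaneously: first, $H = \text{hol}(\Gamma)$ acts freely and properly on $\mathbb{A}^n$, so that the quotient $\mathbb{A}^n/H$ is a smooth manifold; second, the induced map $\overline{\text{dev}}: \widetilde{M}/\Gamma \longrightarrow \mathbb{A}^n/H$ is a diffeomorphism. Combined with the canonical diffeomorphism $\widetilde{M}/\Gamma \cong M$ coming from the universal covering, this gives $M \cong \mathbb{A}^n/H$ as required.

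There is no real obstacle here — the corollary is essentially a repackaging of Lemma \ref{lm1} once completeness is used to upgrade the developing local diffeomorphism to a global one. The only point worth being careful about is verifying that the equivariance hypothesis of Lemma \ref{lm1} is literally the statement of diagram (\ref{eq5}), which it is, with $\phi = \text{hol}$ and the $\Gamma$-action on $\widetilde{M}$ being deck transformations.
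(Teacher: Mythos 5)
Your argument is correct and follows essentially the same route as the paper: use completeness to upgrade $\text{dev}$ to a diffeomorphism $\widetilde{M} \longrightarrow \mathbb{A}^{n}$, note the deck action of $\Gamma$ is free and proper with $M \cong \widetilde{M}/\Gamma$, and apply Lemma \ref{lm1} with the equivariance of Equation \ref{eq5} to conclude $M \cong \mathbb{A}^{n}/H$. Your write-up simply makes explicit the identifications ($G = \Gamma$, $F = \text{dev}$, $\phi = \text{hol}$) that the paper's brief proof leaves implicit.
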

%
%
% PROOF OF COROLLARY 3
\begin{proof}
Since the developing map is a diffeomorphism onto $\mathbb{A}^{n}$ and $M$ is diffeomorphic $\widetilde{M}/\Gamma$, where $\Gamma$ is the group of deck transformations that acts both properly and freely on the universal cover, Lemma \ref{lm1}, applied the developing map and holonomy homomorphism yield that $\widetilde{M}/\Gamma$ and therefore $M$, are both diffeomorphic to $\mathbb{A}^{n}/H$.  
\end{proof}
The following statement has a proof similar to that of Lemma \ref{lm1}, and is used in the construction of the quotient manifolds in Section \ref{sec:3}.  Its proof is omitted as it is nearly identical to that of the previous lemma.  
%
%
% LEMMA 2
\begin{lemma}\label{lm2}
Let $G$ be a lie group acting on smooth manifolds $X$ and $Y$ and let $F : X \longrightarrow Y$ be a smooth map equivariant with respect to the $G$-actions.  If the action of $G$ on $Y$ is free and proper, then so too is the action of $G$ on $X$.  In this case one may form the quotients $X/G$ and $Y/G$ for which $F$ descends to a smooth map $\overline{F} : X/G \longrightarrow Y/G$.  
\end{lemma}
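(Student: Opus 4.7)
The plan is to mirror the structure of the proof of Lemma \ref{lm1}, but simplified since we no longer need to propagate properties across a homomorphism or invert $F$. The freeness and properness of the $G$-action on $X$ will be deduced directly from the corresponding properties on $Y$ by pushing orbits forward through $F$ and exploiting equivariance. The final statement regarding descent of $F$ will then follow from the general theory of quotient manifolds by proper free Lie group actions.

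First I would handle freeness. Suppose $g \in G$ and $x \in X$ satisfy $gx = x$. Applying $F$ and invoking equivariance yields $g F(x) = F(gx) = F(x)$, and since the $G$-action on $Y$ is free, $g$ is the identity. Next I would handle properness. Following the criterion from Lee's \emph{Smooth Manifolds} cited in the proof of Lemma \ref{lm1}, it suffices to show that whenever $x_i \to x$ in $X$ and $g_i x_i \to p$ in $X$, some subsequence of $g_i$ converges in $G$. By continuity of $F$ one has $F(x_i) \to F(x)$ and $F(g_i x_i) \to F(p)$, and equivariance rewrites the second statement as $g_i F(x_i) \to F(p)$. The properness hypothesis on the $G$-action on $Y$ then produces the desired convergent subsequence of $g_i$ in $G$.

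With freeness and properness of $G$ acting on $X$ established, the quotient $X/G$ is a smooth manifold, as is $Y/G$. Let $p : X \to X/G$ and $q : Y \to Y/G$ denote the quotient maps. Equivariance ensures $F$ carries $G$-orbits to $G$-orbits, so the assignment $\overline{F}(Gx) = G F(x)$ is well defined, producing the commutative square
\begin{equation}
    \begin{tikzcd}
        X \arrow{r}{F} \arrow{d}[swap]{p} & Y \arrow{d}{q}\\
        X/G \arrow{r}[swap]{\overline{F}} & Y/G
    \end{tikzcd} \nonumber
\end{equation}
Smoothness of $\overline{F}$ follows from the universal property of the smooth quotient $p : X \to X/G$ applied to the smooth map $q \circ F$, which is constant on $G$-orbits by equivariance.

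I do not expect any serious obstacle here; the argument is essentially a streamlined version of the proof of Lemma \ref{lm1}. The only place one must be slightly careful is the properness step, where it is important to use the sequential criterion (rather than, say, attempting to pull back compact sets through a map $F$ that need not be proper or injective) so that equivariance can transform the displayed limits into a limit to which properness on $Y$ directly applies.
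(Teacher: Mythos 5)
Your argument is correct and is essentially the proof the paper intends: the paper omits the proof of Lemma \ref{lm2} precisely because it is a near-copy of the proof of Lemma \ref{lm1}, and your adaptation (freeness and the sequential properness criterion pulled back through $F$ by equivariance and continuity, then descent via the quotient manifold theorem) is exactly that adaptation, correctly noting that no invertibility of $F$ is needed in this direction.
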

%
% LEMMA 3
\begin{lemma}\label{lm3}
Let $N$ and $P$ be smooth manifolds and $F : N \longrightarrow P$ be an open map where $N$ is connected.  If there exists an open submanifold $U \subseteq N$ for which $F|_{U} : U \longrightarrow P$ is a diffeomorphism, then $N = U$.  
\end{lemma}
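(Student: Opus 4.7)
My plan is to establish that $U$ is closed in $N$; combined with the openness of $U$ by hypothesis, the nonemptiness of $U$ (since $F|_U : U \to P$ is a diffeomorphism, $U$ is empty only if $P$ is, in which case the conclusion is trivial), and the connectedness of $N$, this forces $U = N$.

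To show $U$ is closed, I would use the smooth inverse of the restricted diffeomorphism. Let $g : P \to U$ denote the smooth inverse of $F|_U$, and form the composition $g \circ F : N \to N$, whose image is contained in $U$ by construction. For every $p \in U$ one computes $(g \circ F)(p) = g(F|_U(p)) = p$, so $U$ is contained in the fixed-point set of $g \circ F$. Conversely, any fixed point of $g \circ F$ lies in the image of $g$, which is $U$, so the fixed-point set is contained in $U$. Hence $U$ equals the equalizer $\{p \in N : (g \circ F)(p) = p\}$ of the two maps $g \circ F$ and $\mathrm{id}_N$. Since both maps are continuous into the Hausdorff manifold $N$, this equalizer is closed, so $U$ is closed. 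Connectedness of $N$ together with $U$ being nonempty and clopen gives $U = N$.

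The step I expect to be the main, if minor, point of care is ensuring the continuity of $F$ so that $g \circ F$ is continuous. In every invocation of this lemma in the paper (notably inside the proof of Theorem \ref{thm3}, where $U$ is taken to be $\widetilde{R}_{\infty}U_i$ and $F$ is the developing map), $F$ is a local diffeomorphism, so continuity is automatic and the openness hypothesis is simultaneously satisfied. The openness of $F$ is not strictly required for the equalizer argument above, but it is consistent with the intended application where $F$ is a local diffeomorphism, hence both open and continuous.
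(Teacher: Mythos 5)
Your argument is correct, but it takes a different route from the paper's. The paper proves closedness of $U$ sequentially: given $u_k \in U$ with $u_k \to n$, it sets $u = (F|_{U})^{-1}(F(n))$, shrinks a neighborhood $V$ of $u$ inside $U$, uses openness of $F(V)$ and injectivity of $F|_{U}$ to force $u_k \in V$ eventually, and concludes $n = u$ by uniqueness of limits. You instead globalize the inverse: with $g = (F|_{U})^{-1} : P \to U$, you identify $U$ exactly with the fixed-point set of $g \circ F$, i.e.\ the equalizer of $g\circ F$ and $\mathrm{id}_N$, which is closed because $N$ is Hausdorff and both maps are continuous. Your version is cleaner and slightly more general in spirit (it needs no sequences, hence no first countability, and works verbatim for a continuous map between Hausdorff spaces with $F|_U$ a homeomorphism onto $P$), while the paper's version is elementary and self-contained at the level of point-set arguments already used elsewhere in the paper. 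You are also right on the two hypotheses worth flagging: both proofs use continuity of $F$, which the lemma statement does not literally assert (it only says ``open map''), but which holds in every application since $F$ is the developing map, a local diffeomorphism; and neither proof actually needs the stated openness of $F$ on all of $N$, since the only open images used ($F(V)$ with $V \subseteq U$ in the paper, nothing at all in your equalizer argument) come from $F|_{U}$ being a diffeomorphism. Your handling of the degenerate case ($U$ empty forces $P$, hence $N$, empty) is a harmless extra precaution.
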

This following lemma finds it use in the proof of Theorem \ref{thm3}, to show the open submanifold $\widetilde{R}_{\infty}U_{i}$ defined by Equation \ref{eq20} is equal to all of $\widetilde{M}$.  In this case the developing map fulfills the role of the open map as stated in the lemma.  
%PROOF OF LEMMA 3
\begin{proof}
It suffices to show that $U$ is closed.  Let $u_{k}$ be a sequence of points in $U$ converging to some point $n \in N$.  By continuity of $F$, the sequence $F(u_{k})$ converges to $F(n) \in P$.  Since $F|_{U} : U \longrightarrow P$ is a diffeomorphism, there's a unique $u \in U$ so that $F(n) = F(u)$.  The claim is that $n = u$.  \\
\\
Let $u \in V$ be an open neighborhood in $N$ about $u$.  As $U$ is open in $N$, one may shrink $V$ sufficiently small so that $u \in V\subseteq U$.  Because the sequence $F(u_{k})$ converges to $F(n) = F(u)$ and $F(V)$ is an open subset of $P$ about $F(u)$, there exists a sufficiently large $K \in \mathbb{N}$ so that $F(u_{k}) \in F(V)$ for all $k \geq K$.  Since $F|_{U} : U \longrightarrow P$ is a diffeomorphism, each $u_{k} \in U$, and $V \subset U$, it follows that $u_{k} \in V$ for $k \geq K$.  Thus $u_{k}$ converges to both $u$ and $n$, and by uniqueness of limits, $u = n$.  Consequently, $U$ is a non-empty closed and open subset of $N$, and by connectedness $U = N$.  
\end{proof}

%\section{Section title}
%\label{sec:1}
%Text with citations \cite{RefB} and \cite{RefJ}.
%\subsection{Subsection title}
%\label{sec:2}
%as required. Don't forget to give each section
%and subsection a unique label (see Sect.~\ref{sec:1}).
%\paragraph{Paragraph headings} Use paragraph headings as needed.
%\begin{equation}
%a^2+b^2=c^2
%\end{equation}

% For one-column wide figures use
%\begin{figure}
% Use the relevant command to insert your figure file.
% For example, with the graphicx package use
  %\includegraphics{example.eps}
% figure caption is below the figure
%\caption{Please write your figure caption here}
%\label{fig:1}       % Give a unique label
%\end{figure}
%
% For two-column wide figures use
%\begin{figure}
% Use the relevant command to insert your figure file.
% For example, with the graphicx package use
%  \includegraphics[width=0.75\textwidth]{example.eps}
% figure caption is below the figure
%\caption{Please write your figure caption here}
%\label{fig:2}       % Give a unique label
%\end{figure}
%
% For tables use
%\begin{table}
% table caption is above the table
%\caption{Please write your table caption here}
%\label{tab:1}       % Give a unique label
% For LaTeX tables use
%\begin{tabular}{lll}
%\hline\noalign{\smallskip}
%first & second & third  \\
%\noalign{\smallskip}\hline\noalign{\smallskip}
%number & number & number \\
%number & number & number \\
%\noalign{\smallskip}\hline
%\end{tabular}
%\end{table}

\begin{acknowledgements}
I would like to thank Dr. William Goldman for his input and assistance throughout the formation of this paper.  As always, I thank my wonderful friends whose creativity and good deeds inspire my day to day life.  Finally, a very special thank you to Yon Hui and Chaz Daly, and Rose of Sharon and Anatoly Bourov-Daly for a list of kindnesses several times the length of this paper.
\end{acknowledgements}

% Authors must disclose all relationships or interests that 
% could have direct or potential influence or impart bias on 
% the work: 
%
% \section*{Conflict of interest}
%
% The authors declare that they have no conflict of interest.

% BibTeX users please use one of
%\bibliographystyle{spbasic}      % basic style, author-year citations
\bibliographystyle{spmpsci}      % mathematics and physical sciences
\bibliography{mybibliography}{}   % name your BibTeX data base

%% Non-BibTeX users please use
%\begin{thebibliography}{}
%%
%% and use \bibitem to create references. Consult the Instructions
%% for authors for reference list style.
%%
%\bibitem{RefJ}
%% Format for Journal Reference
%Author, Article title, Journal, Volume, page numbers (year)
%% Format for books
%\bibitem{RefB}
%Author, Book title, page numbers. Publisher, place (year)
%% etc
%\end{thebibliography}

\end{document}